\theoremstyle{plain}
\newtheorem{theorem}{Theorem}[section]
\newtheorem{corollary}[theorem]{Corollary}
\newtheorem{lemma}[theorem]{Lemma}
\newtheorem{proposition}[theorem]{Proposition}
\newtheorem{definition}[theorem]{Definition}
\theoremstyle{remark}
\newtheorem{remark} [theorem]{Remark}
\newtheorem{example}[theorem]{Example}
\newcommand{\col}{\kern -3pt :}
\newcommand{\db}{/\kern -4pt/}
\newcommand{\opp}{ \mathrm{opp}}
\newcommand{\rev}{ \mathrm{rev}}
\newcommand{\tw}{ \mathrm{tw}}
\newcommand{\C}{\mathbb C}
\newcommand{\Z}{\mathbb Z}
\newcommand{\N}{\mathbb N}
\newcommand{\skein}{\mathcal S^A}
\newcommand{\SL}{\ensuremath{\mathrm{SL} }}
\newcommand{\del}{\ensuremath{\partial}}
\newcommand{\pqvect}[2]{\ensuremath{\begin{pmatrix} {#1} \\ {#2} \end{pmatrix} } }
\newcommand{\Tvect}[2]{\ensuremath{\begin{pmatrix} {#1} \\ {#2} \end{pmatrix}_T } }
\newcommand{\smallpqvect}[2]{\ensuremath{ \left( \begin{smallmatrix} {#1} \\ {#2} \end{smallmatrix}  \right) }}
\newcommand{\smallTvect}[2]{\ensuremath{ \left( \begin{smallmatrix} {#1} \\ {#2} \end{smallmatrix}  \right)_T }}
\newcommand{\Tprimevect}[2]{\ensuremath{\begin{pmatrix} {#1} \\ {#2} \end{pmatrix}_{T'} } }
\newcommand{\smallTprimevect}[2]{\ensuremath{ \left( \begin{smallmatrix} {#1} \\ {#2} \end{smallmatrix}  \right)_{T'} }}
\newcommand{\bigD}[4] {\ensuremath D \begin{pmatrix} {#1}  &  {#3}  \\ {#2} & {#4}  \end{pmatrix}}
\newcommand{\smallD}[4] { D \left( \begin{smallmatrix} {#1}  &  {#3}  \\ {#2} & {#4}  \end{smallmatrix} \right)}
\newcommand{\qint}[1]{\ensuremath{ \left[ {#1}  \right]_{A^4} }}
\title{Fast algorithm for multiplication on the  skein algebra of one-hole torus}
\author{Siki Wang}
\address{California Institute of Technology, Pasadena, CA}
\email{siki.wang@caltech.edu}
\author{Helen Wong}
\address{Claremont McKenna College, Claremont, CA}
\email{hwong@cmc.edu}
\begin{document}

\begin{abstract}The Kauffman bracket skein algebra of a surface is a generalization of the Jones polynomial invariant for links that also plays a principal role in the Witten-Reshetikhin-Turaev topological quantum field theory.   However, the multiplicative structure of the skein algebra is not well understood, with a priori exponential complexity.  We consider the case of one-hole torus, and provide a polynomial time algorithm for computing multiplication of any two skein elements.  Some examples of closed form formulas for multiplication of curves with low crossing number are also given. 
\end{abstract}

\maketitle

{\sl 2020 Mathematics Subject Classification: 57-08, 57K31, 57K16.\\
Keywords:   Kauffman bracket,  skein algebra, quantum topology. }

\section{Introduction}

The Kauffman bracket skein algebra of a surface  $\Sigma$ lies in the intersection of quantum topology and hyperbolic geometry.  Originally defined as a generalization of the Jones polynomial for framed links in thickened surfaces \cite{TuraevSkeinAlg, PrzSkeinAlg}, the skein algebra $\skein(\Sigma)$ plays a principal role in the skein theoretic verison of the Witten-Reshetikhin-Turaev topological quantum field theory \cite{BHMV}.   Later, it was discovered to be closely related to the $\mathrm{SL}_2 (\C)$-character variety of the fundamental group $\pi_1(\Sigma)$, which contains as real subvariety the Teichmuller space of $\Sigma$ from hyperbolic geometry \cite{ TuraevPoisson, BullockRings, BullockFrohmanJKB, PrzSikora}.  
This connection was further developed by studying the representation theory of the skein algebra \cite{BonWonQTrace, BonWonSkeinReps1, BonWonSkeinReps2, BonWonSkeinReps3, FKBUnicity, GJSUnicity, karuo2022azumaya}, which relied on development of algebraic tools to understand its algebraic structure, e.g. its center. 

In order to further develop the relationship of the skein algebra with both quantum topology and hyperbolic geometry, it is essential to understand its multiplicative structure.  Unfortunately,  the multiplicative structure of the skein algebra is completely solved for few surfaces beyond the commutative cases.  One important exception is the skein algebra of a closed torus,  for which Bullock and Przytycki computed its presentation \cite{BullockPrz} and Frohman and Gelca provided an elegant Product-to-Sum formula for the basis obtained by applying the Chebyshev polynomial of the first kind to torus knots \cite{FrohmanGelca}.  

In this paper, we investigate the one-hole torus $\Sigma_{1,1}$ and the multiplicative structure of its skein algebra $\skein(\Sigma_{1,1})$.  By computing its presentation, Bullock and Przytycki showed that  $\skein(\Sigma_{1,1})$ is  isomorphic to the quantum group $U_q(so_3)$ \cite{BullockPrz}.
  However, an explicit formula describing multiplication has been combinatorially difficult, due to the existence of the boundary.  
 Bousseau \cite{Bousseau} and Queffelec \cite{Queffelecgl2} showed strong positivity for the structural constants for multiplication in the Chebyshev basis (also known as the bracelets basis), but an analogy to the closed torus's Product-to-Sum formula remains elusive.  
 
 Here, we provide a fast algorithm to compute multiplication in the one-hole torus.   The skein algebra is spanned as a module by \emph{multicurves}, which are disjoint unions of pairwise non-intersecting, simple closed curves.   On the one-hole torus, every multicurve is a union of a $(p,q)$-torus link with some copies of the peripheral loop $\del$.   The algorithm computes the product of multicurves as a linear combination of other multicurves.  This is done using a five-term recursive relation for the discrepancy between multiplication on the one-hole torus and multiplication on the closed torus.   

\begin{theorem} \label{algthm}
There is an algorithm to compute the product of two multicurves $m, m'$ in $\skein(\Sigma_{1,1})$ in $\mathcal{O} (i(m,m')^6)$ steps, where $i(m, m')$ is the intersection number of $m$ and $m'$.  
\end{theorem}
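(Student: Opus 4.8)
The plan is to turn the product of two multicurves into a recursion indexed by pairs of torus-knot slopes, solve that recursion by memoization, and bound both the number of subproblems encountered and the sizes of the Laurent coefficients that appear along the way.

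\emph{Reduction to primitive torus knots.} Because the peripheral loop $\del$ is central in $\skein(\Sigma_{1,1})$ and every multicurve is a disjoint union of a $(p,q)$-torus link with copies of $\del$, I would first strip off the $\del$-factors — they merely shift the $\del$-degree of the answer and leave $i(m,m')$ unchanged — and then rewrite a $(p,q)$-torus link with $\gcd(p,q)=d$, via the universal Chebyshev expansion, as a $\Z[\del]$-combination of the basis elements $\Tvect{kp/d}{kq/d}$ for $1\le k\le d$. This costs $\mathcal{O}(d)\le\mathcal{O}(i(m,m'))$ and reduces the task to computing $\Tvect{p}{q}*\Tvect{r}{s}$, where $\Tvect{p}{q}$ is the basis element obtained by threading the Chebyshev polynomial of the first kind through the primitive $(p,q)$-torus knot, so that $i\!\left(\Tvect{p}{q},\Tvect{r}{s}\right)=|ps-qr|$.

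\emph{The five-term recursion.} On the closed torus, the Frohman--Gelca Product-to-Sum formula evaluates this product in closed form as the sum of the two leading terms $A^{\,ps-qr}\Tvect{p+r}{q+s}$ and $A^{\,-(ps-qr)}\Tvect{p-r}{q-s}$. On $\Sigma_{1,1}$ the analogous resolution of crossings yields these same two leading terms together with corrections supported on torus knots of strictly smaller slope-determinant and carrying extra powers of $\del$. The heart of the proof is the asserted five-term relation for the discrepancy
\[
\bigD{p}{q}{r}{s}\ :=\ \Tvect{p}{q}*_{\Sigma_{1,1}}\Tvect{r}{s}\;-\;A^{\,ps-qr}\Tvect{p+r}{q+s}\;-\;A^{\,-(ps-qr)}\Tvect{p-r}{q-s},
\]
which I would establish by a direct skein computation: put the two curves in minimal position on $\Sigma_{1,1}$, resolve a single crossing, and trace how the two resulting diagrams either close up into torus knots of smaller complexity — acquiring a factor of $\del$ exactly when a resolved component becomes peripheral — or reduce to configurations already expressed through $\bigD{\cdot}{\cdot}{\cdot}{\cdot}$ of smaller determinant. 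The conclusion should be a five-term recursion expressing $\bigD{p}{q}{r}{s}$ as a $\Z[A^{\pm1},\del]$-linear combination of discrepancies $\bigD{p'}{q'}{r'}{s'}$ with $|p's'-q'r'|<|ps-qr|$ together with one explicit boundary term; the base case $|ps-qr|\le 1$ is the disjoint/commuting situation and is handled by inspection. I expect the two genuine obstacles to be (i) the bookkeeping that pins down precisely which resolutions contribute a $\del$, and (ii) proving that the recursion strictly decreases the determinant while producing exactly the stated number of terms — a naive resolution of all $n$ crossings branches $2^n$ ways, and the whole point is that the five-term relation collapses this.

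\emph{Complexity.} Running the recursion top-down with memoization, every slope pair $(p',q',r',s')$ that arises lies inside the parallelogram spanned by $(p,q)$ and $(r,s)$, whose area is $n:=i(m,m')$; counting its lattice points, together with the $\mathcal{O}(n)$-bounded $\del$-degree that must be tracked, bounds the number of distinct subproblems by $\mathcal{O}(n^2)$. Each subproblem combines $\mathcal{O}(1)$ previously computed answers, and each answer is a linear combination of $\mathcal{O}(n)$ basis multicurves whose coefficients are Laurent polynomials in $A$ of degree $\mathcal{O}(n)$ (the quantum integers $\qint{\cdot}$ occurring in them have degree $\mathcal{O}(n)$), so forming a combination and collecting like terms costs $\mathcal{O}(n^2)$ per subproblem and $\mathcal{O}(n^4)$ overall. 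Finally, re-attaching the stripped $\del$-factors and expanding each of the $\mathcal{O}(n)$ output torus links, of multiplicity up to $n$, back into the multicurve basis by the Chebyshev step of the first paragraph contributes the remaining polynomial factors, yielding the claimed $\mathcal{O}(i(m,m')^6)$ bound.
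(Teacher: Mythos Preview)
Your proposal has a genuine gap at the most important step: the derivation of the five-term recursion. You propose to obtain it by ``put[ting] the two curves in minimal position on $\Sigma_{1,1}$, resolv[ing] a single crossing, and trac[ing] how the two resulting diagrams \ldots\ reduce to configurations already expressed through $\bigD{\cdot}{\cdot}{\cdot}{\cdot}$ of smaller determinant.'' But resolving one crossing via the Kauffman relation produces two diagrams, not five, and those diagrams are generic links in the thickened surface, not products of torus knots; there is no mechanism by which they become discrepancies for other slope pairs. The bookkeeping you flag as obstacle (i) is not merely tedious --- it is the whole difficulty, and the single-crossing approach does not circumvent it. The paper's five-term relation is obtained by an entirely different and much softer argument: expand the \emph{associativity} identity
\[
\Tvect{u}{v}\cdot\Bigl(\Tvect{p}{q}\cdot\Tvect{r}{s}\Bigr)=\Bigl(\Tvect{u}{v}\cdot\Tvect{p}{q}\Bigr)\cdot\Tvect{r}{s}
\]
using the definition of $D$ on both sides. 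This immediately yields a six-term identity among discrepancies (Theorem~\ref{assoc}), with an auxiliary slope $(u,v)$ that one is free to choose. No skein resolutions are performed at all.

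A second gap is structural. The associativity recursion does \emph{not} directly express $\bigD{p}{q}{r}{s}$ in terms of discrepancies of strictly smaller determinant; for instance, with $(u,v)=(1,0)$ the right-hand side contains $\smallD{1}{0}{p}{q+1}$ and the term $\smallTvect{1}{0}\cdot\smallD{p}{q}{0}{1}$, the latter of which must itself be expanded and produces further discrepancies. Making this terminate requires first normalizing via the mapping class group to the shape $\smallD{p}{q}{0}{s}$ with $0\le q<p$, and then proving by a careful double induction (Proposition~\ref{Dpq01}) that all subsidiary discrepancies that arise are bounded termwise by $\smallpqvect{p-2}{q-1}$ and hence already computed. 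Your claim that ``every slope pair that arises lies inside the parallelogram spanned by $(p,q)$ and $(r,s)$'' is asserted without argument and does not follow from your crossing-resolution picture. Without the associativity source for the recursion and the mapping-class normalization, neither the correctness nor the $\mathcal{O}(n^6)$ complexity bound goes through.
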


In contrast, computing the product by resolving all crossings is $\mathcal O(2^{i(m,m')})$.  Previous algorithms such as \cite{FrohmanBloomquist} to compute multiplication on the one-hole torus relied on the Euclidean Algorithm (or equivalently, traveling on the Farey graph) and are slower.  See also \cite{bakshi4-holedsphere} for the 4-holed sphere, whose combinatorics are closely related to that of the one-hole torus.  In particular, they use an adjusted Euclidean Algorithm and  conjecture that it  is $\mathcal O(\det^{\log\det})$, where $\det$ corresponds to half the intersection number of two multicurves on the 4-holed sphere.  


In addition to Theorem \ref{algthm}, we provide closed form formulas for some cases and prove them using a five-term recursive relation.   We were unable to conjecture a simple closed form formula for multiplication for $\skein(\Sigma_{1,1})$ in all cases; however, our methods  show that the five-term recursive relation is sufficient to determine all multiplication tables and hence should also determine a closed form formula, should one be identified.  Results of our computations also confirm the positivity proven using other methods in \cite{Bousseau, Queffelecgl2}.

\section{Background and Notation}
Let $A$ be an arbitrary variable.  Let $\Sigma_{g,n}$ be an orientable surface with genus $g$ with $n$ punctures.  Then the \emph{Kauffman bracket skein algebra} $\skein(\Sigma_{g,n})$ is generated as a $\Z[A, A^{-1}]$-module by the isotopy classes of framed knots in $\Sigma \times I$, subject to the following skein relations
\begin{center}
\begin{tabular}{lll}
(1)& (Skein relation) & 
$\begin{minipage}{.4in}\includegraphics[width=\textwidth]{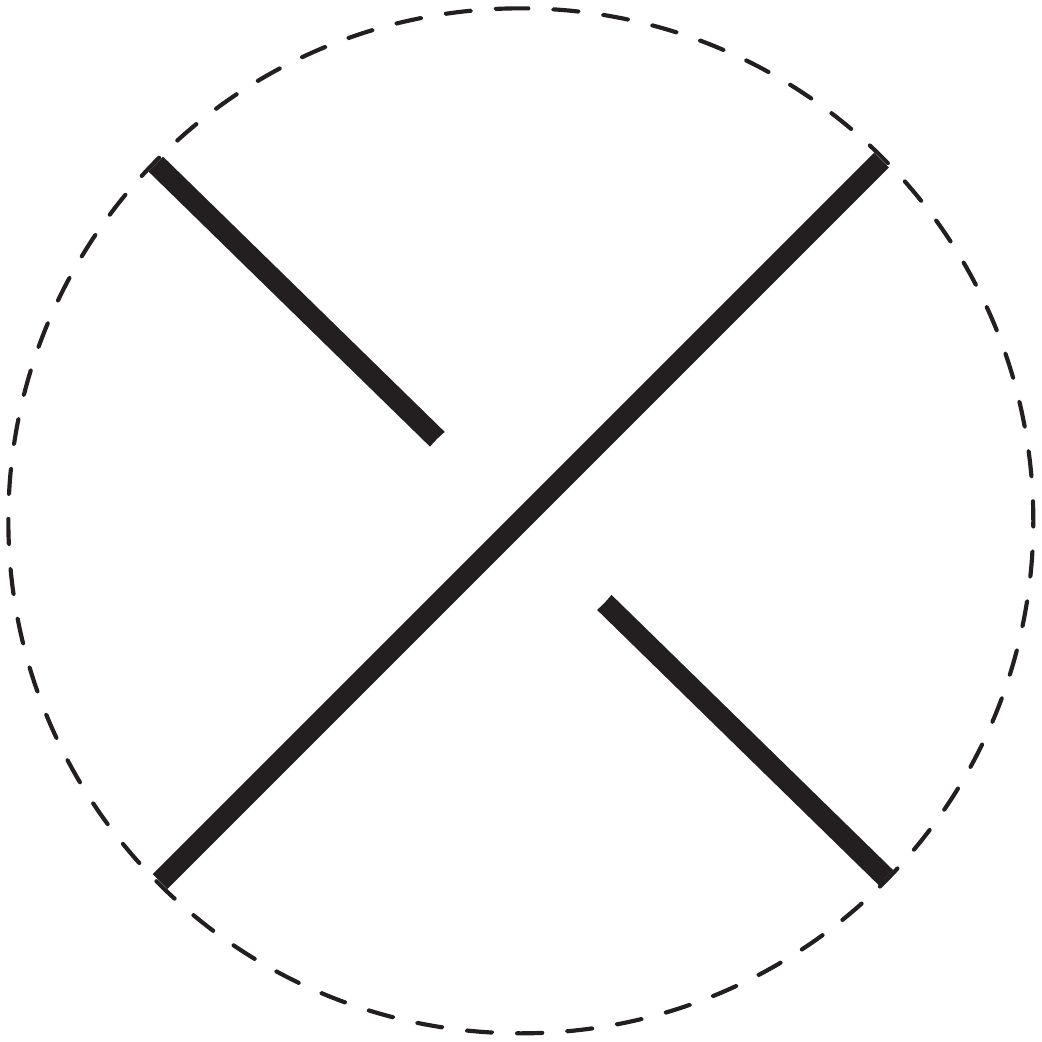}\end{minipage} 
-  
A \begin{minipage}{.4in}\includegraphics[width=\textwidth]{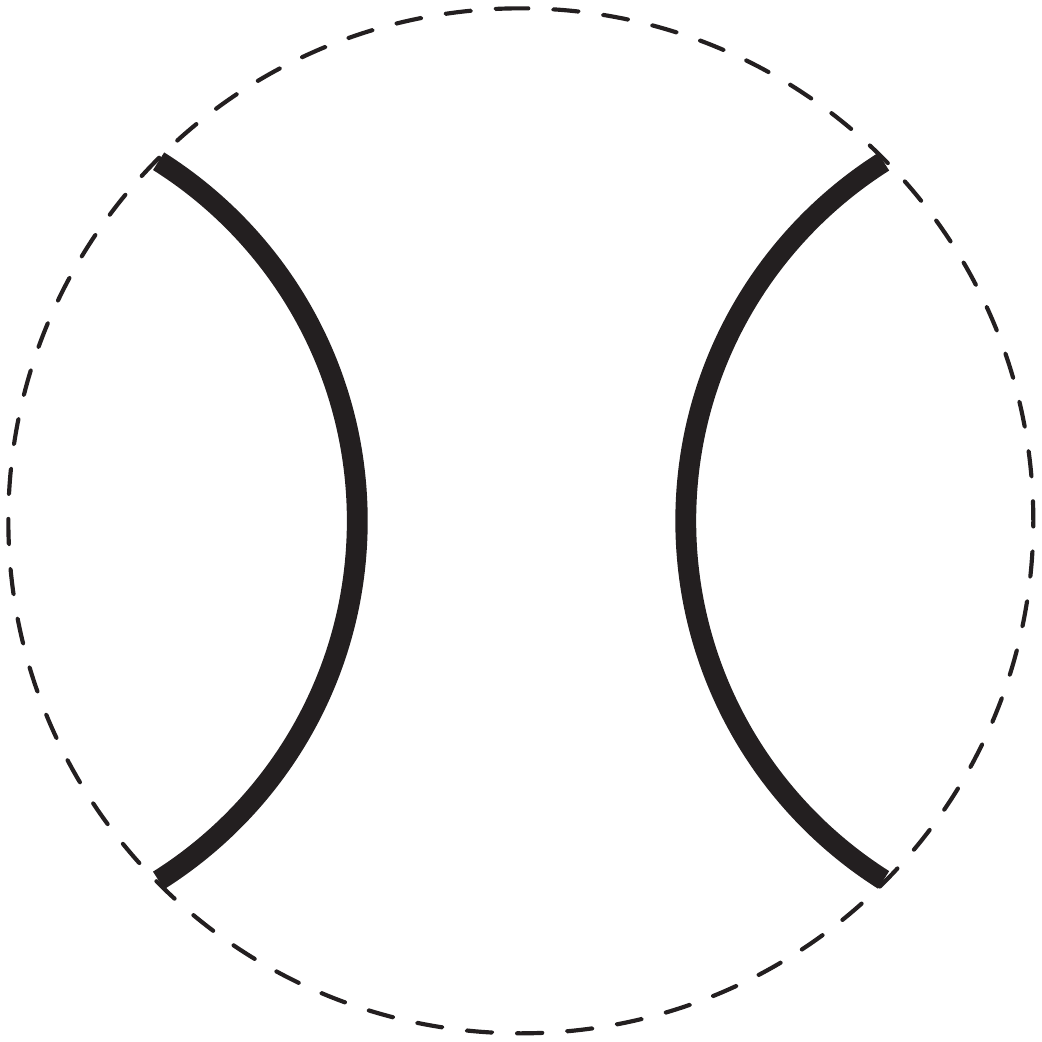}\end{minipage} 
-
A^{-1} \begin{minipage}{.4in}\includegraphics[width=\textwidth]{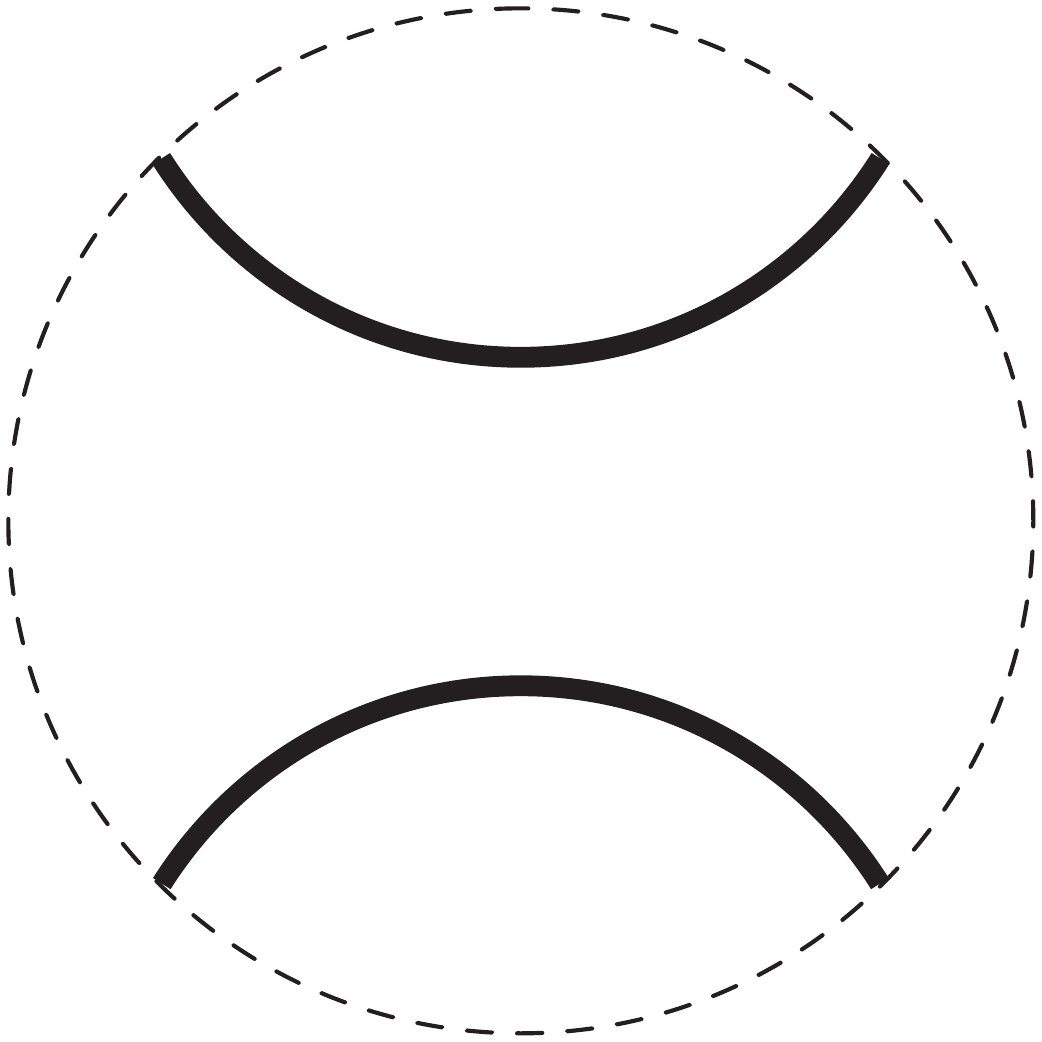}\end{minipage} $
\\[15pt]
(2)& (Framing relation) & 
$\begin{minipage}{.4in}\includegraphics[width=\textwidth]{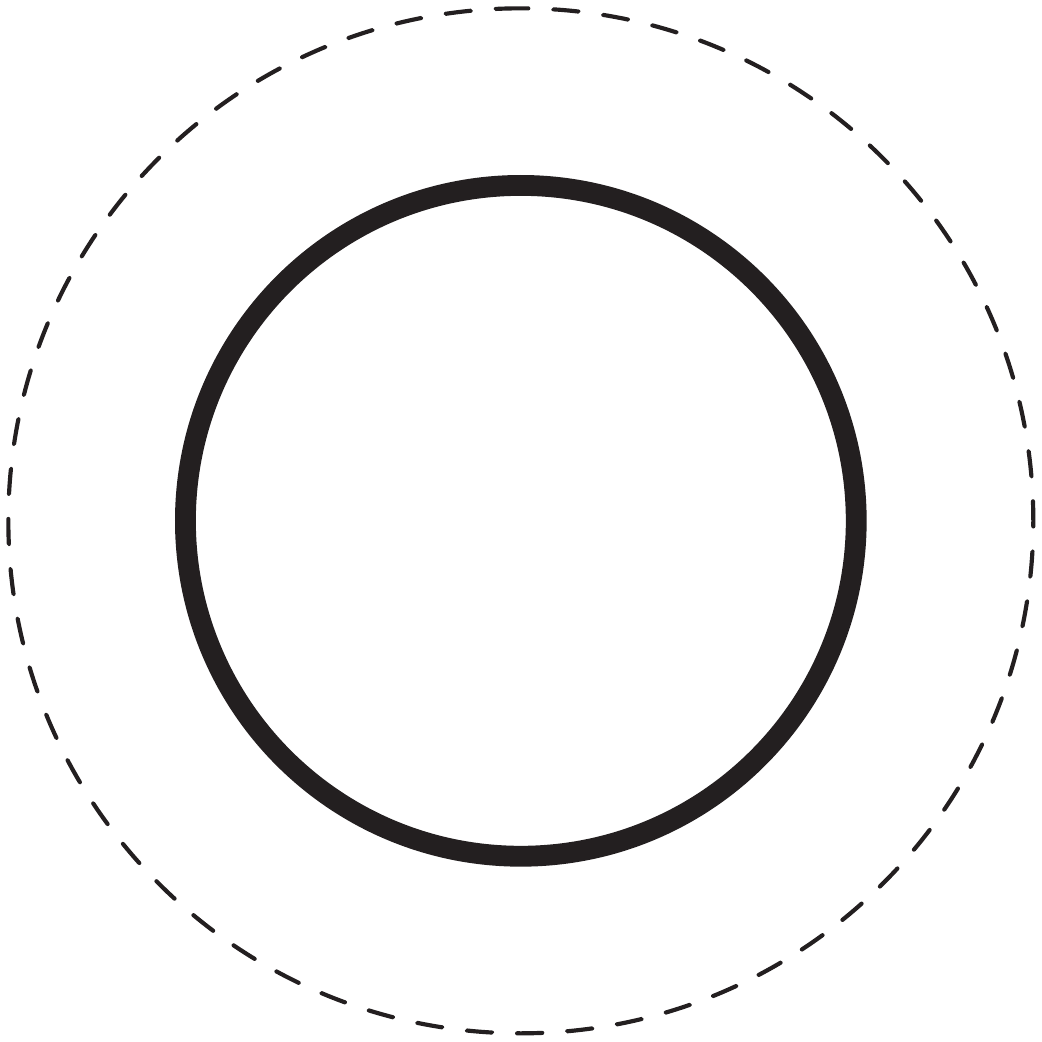} \end{minipage} 
+ A^2 + A^{-2}$\\
\end{tabular}
\end{center}
and where multiplication is by superposition.  In particular, $\alpha \cdot \beta$ has $\alpha$ on top of $\beta$.  

For any link $K$ and $i \in \N$, let $K^{(i)}$ be the $i$-fold cable of $K$, obtained by taking $i$ parallel copies $K$ in the direction of the framing.  One can extend this linearly.  If $p(x) \in Z[A, A^{-1}] [x] $ is a polynomial in $x$ such that $p= \sum_{i} c_i x^i$, then let the \emph{threading of $K$ by $p(x)$} be the skein
\[ K^p = \sum_i c_i K^{(i)}\]
Note that when $K$ has no crossings, then $K^p = p(K)$.

In this paper, we will primarily be concerned with threading by Chebyshev polynomials of the first kind $T_n$, which are defined recursively using: 
$T_0(x) = 2$, $T_1(x) = x$, and $T_{k}(x) = x T_{k-1}(x) - T_{k-2}(x)$ for $k \geq 2$.    
Note that these polynomials are closely related to the recursion of cosines and satisfy the identity $ T_k(2\cos \theta) = 2\cos(k \theta)$. 

We begin by a pair of illustrative examples that explains some of the combinatorial difficulty of the one-hole torus $\Sigma_{1,1}$ in contrast to the closed torus $\Sigma_{1,0}$. 

\begin{example} \label{ex:closed}
Let $\Sigma = \Sigma_{1,0}$ the closed torus, which we represent as a square with its opposite sides identified. 
 For every pair $(p, q) \in \Z^2 \setminus \{(0,0) \}$, let $\smallpqvect{p}{q}$ denote the torus link with $\gcd(p, q)$ components, all with slope $q/p$. 

In the closed torus, 
 \begin{eqnarray*}
\smallpqvect{2}{1} \cdot \smallpqvect{0}{1} 
&\!\!= \!\!& \begin{minipage}{.5in}\includegraphics[width=\textwidth]{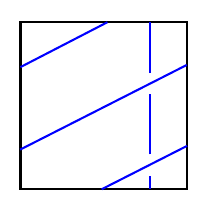}\end{minipage} 
\\
&\!\!= \!\!&A^2 \begin{minipage}{.5in}\includegraphics[width=\textwidth]{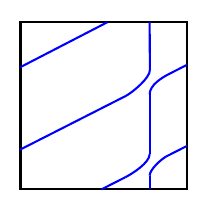}\end{minipage} 
+  \begin{minipage}{.5in}\includegraphics[width=\textwidth]{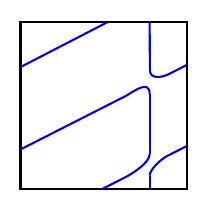}\end{minipage} 
+  \begin{minipage}{.5in}\includegraphics[width=\textwidth]{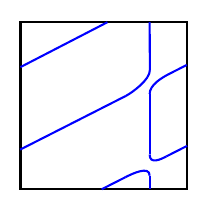}\end{minipage} 
+ A^{-2} \begin{minipage}{.5in}\includegraphics[width=\textwidth]{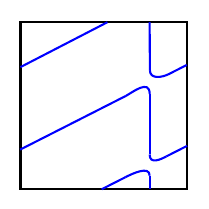}\end{minipage} 
\\
&\!\!= \!\!& A^2 \smallpqvect{2}{2} + (-A^2- A^{-2}) + (-A^2- A^{-2}) + A^{-2} \smallpqvect{2}{0}
\\
&\!\!= \!\!& A^2 (\smallpqvect{2}{2} -2)  + A^{-2}( \smallpqvect{2}{0} -2) \\
&\!\!= \!\!& A^2 \, T_2(\smallpqvect{1}{1})  + A^{-2} \, T_2( \smallpqvect{1}{0}) 
\end{eqnarray*}
where we've simplified the computation by rewriting using Chebyshev polynomials.  Since there are 2 crossings, $2^2$ resolutions result, of which two contain curves bounding disks in the closed torus.  
\end{example}

\begin{example} \label{ex:holed} 
Now let $\Sigma = \Sigma_{1,1}$ the one-hole torus, which we represent as a square with a neighborhood of its corners removed and opposite sides identified. 
With abuse of notation,  let $\smallpqvect{p}{q}$ again denote the $(p,q)$-torus link in the one-hole torus using the convention established above for the closed torus.  In addition, for $k \in \N$, let $\del$ be the peripheral loop, i.e. the isotopy class of loop parallel to the boundary.  

In the one-hole torus, 
 \begin{eqnarray*}
\smallpqvect{2}{1} \cdot \smallpqvect{0}{1} 
&\!\!= \!\!& \begin{minipage}{.5in}\includegraphics[width=\textwidth]{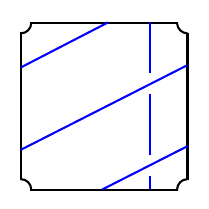}\end{minipage} 
\\
&\!\!= \!\!&A^2 \begin{minipage}{.5in}\includegraphics[width=\textwidth]{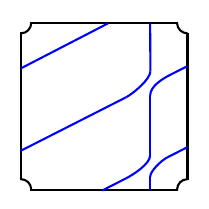}\end{minipage} 
+  \begin{minipage}{.5in}\includegraphics[width=\textwidth]{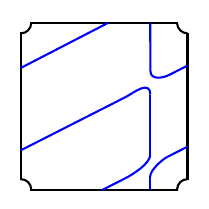}\end{minipage} 
+  \begin{minipage}{.5in}\includegraphics[width=\textwidth]{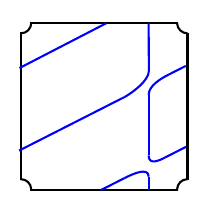}\end{minipage} 
+ A^{-2} \begin{minipage}{.5in}\includegraphics[width=\textwidth]{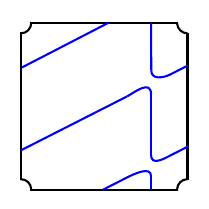}\end{minipage} 
\\
&\!\!= \!\!& A^2 \smallpqvect{2}{2} + \del   + (-A^2- A^{-2}) + A^{-2} \smallpqvect{2}{0}
\\
&\!\!= \!\!& A^2 (\smallpqvect{2}{2} -2)  + A^{-2}( \smallpqvect{2}{0} -2) + (\del + A^2 + A^{-2}) 
\\
&\!\!= \!\!& A^2 T_2(\smallpqvect{1}{1})  + A^{-2} T_2(\smallpqvect{1}{0}) + (\del + A^2 + A^{-2}) 
\end{eqnarray*}
In contrast to the closed torus case, one of the four resolutions involves a curve bounding a disk (so can be replaced by $- A^2 - A^{-1}$), and one of the resolutions involves  $\del$.  Determining when a resolution contains loops bounding a disk and when a resolutions contain copies of $\del$ presents the main combinatorial difficulty in understanding the algebraic structure of $\skein(\Sigma_{1,1})$.  As we will encounter later in Example \ref{ex:10401}, the discrepancy between the closed torus case and the one-holed torus case can be quite complicated. 
\end{example}

In this paper, we focus on the case where $\Sigma= \Sigma_{1,1}$.  Every multicurve is a union of a $(p,q)$-torus link with parallel copies of the peripheral loop $\del$.   Hence the skein algebra $\skein(\Sigma_{1,1})$ is spanned as a $\Z[A, A^{-1}]$-module by the set of multicurves $\mathcal M = \{\del^k \smallpqvect{p}{q} \}_{ k \in \N, (p, q) \in \Z^2} $.    

Note that we follow the convention $\smallpqvect{p}{q} = \smallpqvect{-p}{-q}$, and unless otherwise specified, we will choose $p\geq 0$.  Also, note that $\smallpqvect{0}{0} = \emptyset$, whereas $\del$ is the peripheral loop.  Both $\emptyset$ and $\del$ are central elements of the skein algebra.  Also remark that the crosing number   $i(m, m')$  between two multicurves $m,  m' \in \mathcal M$ is easily computed.     When the multicurves are torus links $\smallpqvect{p}{q}$ and $\smallpqvect{r}{s}$,  then $i\left( \smallpqvect{p}{q}, \smallpqvect{r}{s} \right) = | \begin{smallmatrix} p & r\\q&s \end{smallmatrix}| = ps-rq$.  We also have $ i(\emptyset, m) = 0 $ and  $i(\del^k, m) = 0 $ for all $m \in \mathcal M$.

Following the lead of Frohman and Gelca, define \[ \Tvect{p}{q} = T_{\gcd(p, q)} \pqvect{\frac{p}{\gcd(p, q)}}{\frac{q}{\gcd(p, q)}}. \]    
Note that $\smallTvect{0}{0} = 2 \emptyset $.   Later in Section \ref{sec:closedformulas} we will also use variations $T'$ and $\smallTprimevect{p}{q}$.

\begin{example}
Since $\gcd(2,2) = 2$ and $T_2(x) = x^2 - 2$, we have  $\smallTvect{2}{2} =T_2( \smallpqvect{1}{1}) = \smallpqvect{2}{2} - 2$.  
\end{example}

Let  $\eta= \del + A^2 + A^{-2}$, and let $\mathcal M_T  = \{\emptyset \} \cup \{ \eta^k \smallTvect{p}{q} \}$ where $k \in \N$ and $(p, q) \in \Z^2$.  $\mathcal M_T$ spans $\skein(\Sigma_{1,1})$ as a $Z[A, A^{-1}]$-module, and $\eta$ is a central element.   For reasons explained below, $\mathcal M_T$ will be our basis of choice.

As we saw from the calculations in Examples \ref{ex:closed} and \ref{ex:holed}, $\skein(\Sigma_{1,1})$ is closely related to $\skein(\Sigma_{1,0})$, but there are significant differences.  In particular, the one-hole torus does not satisfy Frohman-Gelca's Product-to-Sum; that is, $\smallTvect{p}{q} \smallTvect{r}{s} \neq A^{| \begin{smallmatrix} p & r\\q&s \end{smallmatrix}|} \smallTvect{p+r}{q+s} + A^{- | \begin{smallmatrix} p & r\\q&s \end{smallmatrix}| } \smallTvect{p-r}{q-s} $ in $\skein(\Sigma_{1,1})$.  This discrepancy is the subject of our paper.


\begin{proposition}\label{FrohmanGelca}
In $\skein(\Sigma_{1,1}) $, 
$  \Tvect{p}{q} \cdot \Tvect{r}{s} - A^{| \begin{smallmatrix} p & r\\q&s \end{smallmatrix}|} \pqvect{p+r}{q + s}_T 
		-  A^{- | \begin{smallmatrix} p & r\\q&s \end{smallmatrix} |} \pqvect{p-r}{q - s}_T$
is divisible by		 $\eta= \del + A^2 + A^{-2}$  .   
\end{proposition}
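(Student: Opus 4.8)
\emph{Proof proposal.}
The plan is to realize the left-hand side as an element of the kernel of the natural ``cap off the boundary'' homomorphism and then to identify that kernel with the principal ideal $\eta\cdot\skein(\Sigma_{1,1})$. Concretely, I would introduce the $\Z[A,A^{-1}]$-algebra homomorphism $\Phi\colon \skein(\Sigma_{1,1})\to\skein(\Sigma_{1,0})$ induced by the inclusion $\Sigma_{1,1}\hookrightarrow\Sigma_{1,0}$ that glues a disk onto the boundary circle (pictorially, the square-with-corners-removed sits inside the square). Since the skein relations are local, $\Phi$ is well defined and multiplicative, and on the spanning set it acts by $\smallpqvect{p}{q}\mapsto\smallpqvect{p}{q}$ (the $(p,q)$-torus link in the square is carried to the $(p,q)$-torus link in the square) and $\del\mapsto -A^2-A^{-2}$ (the peripheral loop bounds a disk in $\Sigma_{1,0}$, so the framing relation collapses it). Consequently $\Phi(\eta)=0$, and $\Phi(\smallTvect{p}{q})=\smallTvect{p}{q}$ for every $(p,q)$, where the right-hand side now denotes the corresponding element of $\skein(\Sigma_{1,0})$.

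Next I would record the elementary fact that divisibility by $\eta$ is detected exactly by $\Phi$: an element $x\in\skein(\Sigma_{1,1})$ lies in $\eta\cdot\skein(\Sigma_{1,1})$ if and only if $\Phi(x)=0$. The ``only if'' is immediate from multiplicativity and $\Phi(\eta)=0$. For the converse, write $x=\sum_{(p,q)} x_{p,q}(\del)\,\smallpqvect{p}{q}$ using the multicurve spanning set, with $x_{p,q}(\del)\in\Z[A,A^{-1}][\del]$; this is legitimate because $\del$ is central and $\{\del^k\smallpqvect{p}{q}\}$ spans. Then $\Phi(x)=\sum_{(p,q)} x_{p,q}(-A^2-A^{-2})\,\smallpqvect{p}{q}$, and since the torus links $\smallpqvect{p}{q}$ are $\Z[A,A^{-1}]$-linearly independent in $\skein(\Sigma_{1,0})$, the hypothesis $\Phi(x)=0$ forces $x_{p,q}(-A^2-A^{-2})=0$ for each $(p,q)$. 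As $\eta=\del-(-A^2-A^{-2})$ is monic in $\del$, the factor theorem gives $\eta\mid x_{p,q}(\del)$ in $\Z[A,A^{-1}][\del]$; gathering the quotients exhibits $x$ as $\eta$ times an element of $\skein(\Sigma_{1,1})$.

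Finally I would apply this to $x=\smallTvect{p}{q}\cdot\smallTvect{r}{s}-A^{d}\smallTvect{p+r}{q+s}-A^{-d}\smallTvect{p-r}{q-s}$, where $d=ps-rq$. By the first step $\Phi(x)$ equals the same combination computed in $\skein(\Sigma_{1,0})$, which vanishes by Frohman--Gelca's Product-to-Sum formula for the closed torus; hence $\Phi(x)=0$, and the second step gives $\eta\mid x$, as claimed.

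The step I expect to be the crux is the kernel characterization in the second paragraph, and in particular its reliance on two structural inputs: that the multicurves span $\skein(\Sigma_{1,1})$ (already stated in the text) and, more essentially, that the torus links are $\Z[A,A^{-1}]$-linearly independent in $\skein(\Sigma_{1,0})$ (the standard multicurve-basis theorem). The only geometric point beyond locality of the skein relations is the verification that $\Phi$ sends the $(p,q)$-torus link to the $(p,q)$-torus link and the peripheral loop to a trivial loop; once the embedding $\Sigma_{1,1}\hookrightarrow\Sigma_{1,0}$ is fixed compatibly with the square pictures this is immediate, but it should be spelled out.
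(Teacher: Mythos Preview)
Your proposal is correct and follows essentially the same route as the paper: both use the algebra homomorphism $\skein(\Sigma_{1,1})\to\skein(\Sigma_{1,0})$ induced by capping off, identify its kernel with the principal ideal $(\eta)$, and then invoke the Frohman--Gelca Product-to-Sum formula on the closed torus. The only difference is that the paper simply cites Bullock--Przytycki for $\ker\iota=(\eta)$, whereas you supply a direct argument via the multicurve bases of $\skein(\Sigma_{1,1})$ and $\skein(\Sigma_{1,0})$ together with the factor theorem in $\Z[A,A^{-1}][\del]$.
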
 
\begin{proof}
In \cite{BullockPrz}, it was shown that the embedding $\Sigma_{1,1}  \hookrightarrow \Sigma_{1, 0}$ induces a surjective algebra homomorphism $ \iota: \skein(\Sigma_{1,1}) \to \skein(\Sigma_{1, 0})$ and $\ker(\iota)$ is the principal  ideal  generated by~$\eta$.  
In $\skein(\Sigma_{1, 0})$,   Frohman and Gelca's  Product-to-Sum formula 
$\smallTvect{p}{q} \cdot \smallTvect{r}{s} - A^{| \begin{smallmatrix} p & r\\q&s \end{smallmatrix}|} \smallpqvect{p+r}{q + s}_T 
		-  A^{- | \begin{smallmatrix} p & r\\q&s \end{smallmatrix} |} \smallpqvect{p-r}{q - s}_T = 0$ holds, and 
		 the proposition follows.   
\end{proof}

\begin{definition} 
For any $p, q, r, s \in \Z$, the \emph{Frohman-Gelca discrepancy}  for $\Tvect{p}{q} \cdot \Tvect{r}{s} $  is defined to be the skein
\[ \bigD{p}{q}{r}{s}=   \frac{1}{\eta} \left( \Tvect{p}{q} \cdot \Tvect{r}{s} - A^{| \begin{smallmatrix} p & r\\q&s \end{smallmatrix}|} \pqvect{p+r}{q + s}_T 
		-  A^{- | \begin{smallmatrix} p & r\\q&s \end{smallmatrix} |} \pqvect{p-r}{q - s}_T \right) \]
		where $\eta =  \del + A^2 + A^{-2} $. 
\end{definition} 

Alternatively, we have
\begin{equation} \label{def:D}
 \Tvect{p}{q} \cdot \Tvect{r}{s} 
 =  A^{| \begin{smallmatrix} p & r\\q&s \end{smallmatrix}|} \pqvect{p+r}{q + s}_T 
 +   A^{- | \begin{smallmatrix} p & r\\q&s \end{smallmatrix} |} \pqvect{p-r}{q - s}_T  
 + \eta \, \bigD{p}{q}{r}{s}.  
 \end{equation}
Computing the  Frohman-Gelca discrepancy $D$ immediately yields the structural constants for multiplication in the Chebyshev basis $\mathcal M_T$ for $\skein(\Sigma_{1,1})$.  A priori, computation of $\smallD{p}{q}{r}{s}$ involves at least $2^c$ resolutions, where $c=ps-rq$, but we will show that it can be computed in far fewer steps via a recursive process based on Theorem \ref{assoc}.

Note that the discrepancy need not be a scalar, but is in general a skein.  

\begin{example}  \label{ex:10401}
\[ \bigD{10}{4}{0}{1} = A^{-2} \pqvect{8}{3}_T + A^{-6} \qint{2} \pqvect{6}{3}_T+ A^{-4} \qint{2}\pqvect{4}{1}_T + A^2 ( \eta+ \qint{4}) \pqvect{2}{1}_T, 	\] 
where $\qint{ n} =  \dfrac{ A^{2n} - A^{-2n}} {A^2 - A^{-2}}$ for integers $n \geq 1$.  
\end{example}

We leave the proofs of the next two lemmas as exercises for the reader. 
\begin{lemma} \label{smallcrossings}
If $|ps-rq| \in \{ 0, 1 \} $, then $\bigD{p}{q}{r}{s} = 0$. 
\end{lemma}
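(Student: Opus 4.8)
The plan is to observe that when $|ps-rq| \in \{0,1\}$, the torus links $\smallpqvect{p}{q}$ and $\smallpqvect{r}{s}$ either are parallel (determinant $0$) or fill the torus in the simplest possible way (determinant $\pm 1$), and in both cases the product can be computed directly with no curves bounding disks and no copies of $\del$ appearing. Equivalently, I would show that for these two cases the multiplication in $\skein(\Sigma_{1,1})$ literally agrees with the Frohman-Gelca Product-to-Sum formula for $\skein(\Sigma_{1,0})$, so that the quantity inside the parentheses in the definition of $\bigD{p}{q}{r}{s}$ is zero, hence $D=0$.

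First I would handle the determinant-$0$ case. If $ps-rq=0$ then $\smallpqvect{p}{q}$ and $\smallpqvect{r}{s}$ are cables of the same primitive curve, say $\smallpqvect{a}{b}$ with $\gcd(a,b)=1$, with $\smallpqvect{p}{q} = \smallpqvect{a}{b}^{(g)}$ and $\smallpqvect{r}{s} = \smallpqvect{a}{b}^{(h)}$ up to sign. Then $\smallTvect{p}{q} = T_g(\smallpqvect{a}{b})$ and $\smallTvect{r}{s} = T_h(\smallpqvect{a}{b})$ are both polynomials in the single commuting variable $\smallpqvect{a}{b}$ (these curves can be isotoped to be disjoint, so they commute in the skein algebra and the product involves no crossings). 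The Chebyshev identity $T_g(x)T_h(x) = T_{g+h}(x) + T_{|g-h|}(x)$ then gives $\smallTvect{p}{q}\cdot\smallTvect{r}{s} = \smallTvect{p+r}{q+s} + \smallTvect{p-r}{q-s}$, which since $|\begin{smallmatrix}p&r\\q&s\end{smallmatrix}|=0$ is exactly the right-hand side of Equation \eqref{def:D} with $\bigD{p}{q}{r}{s}=0$. (One should also dispose of the degenerate subcases where one of the vectors is $(0,0)$, using $\smallTvect{0}{0}=2\emptyset$ and $T_0 = 2$, and $\smallTvect{p}{q}^{\,2} = \smallTvect{2p}{2q} + 2$; these are immediate from the same Chebyshev identity.)

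Next, the determinant-$\pm 1$ case. Here $\begin{smallmatrix}p&r\\q&s\end{smallmatrix}$ lies in $\mathrm{GL}_2(\Z)$, so $\smallpqvect{p}{q}$ and $\smallpqvect{r}{s}$ are both primitive simple closed curves (as $\gcd(p,q)=\gcd(r,s)=1$) meeting in a single transverse point, and $\smallTvect{p}{q}=\smallpqvect{p}{q}$, $\smallTvect{r}{s}=\smallpqvect{r}{s}$. By the change-of-coordinates principle (acting by the mapping class group of $\Sigma_{1,1}$, which surjects onto $\mathrm{SL}_2(\Z)$ and, together with the hyperelliptic involution, realizes the sign change), it suffices to check one representative, e.g. $\smallpqvect{1}{0}\cdot\smallpqvect{0}{1}$. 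Resolving the single crossing gives two terms, $A\,\smallpqvect{1}{1}$ and $A^{-1}\,\smallpqvect{1}{-1}$; crucially, neither resolution produces a curve bounding a disk in $\Sigma_{1,1}$ nor a peripheral loop, because a single smoothing of two filling curves on the torus yields a connected primitive curve. Thus $\smallpqvect{1}{0}\cdot\smallpqvect{0}{1} = A\,\smallpqvect{1}{1} + A^{-1}\,\smallpqvect{1}{-1}$, matching Equation \eqref{def:D} with $D=0$; transporting by the mapping class group gives the general determinant-$\pm1$ statement.

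The main obstacle I anticipate is being careful about the topological claim in the determinant-$\pm 1$ case — that \emph{neither} of the two resolutions of the single intersection point of two filling curves on $\Sigma_{1,1}$ contains a trivial loop or a copy of $\del$. This is really the heart of the lemma, and it is where the ``combinatorial difficulty'' flagged in Example \ref{ex:holed} is absent precisely because there is only one crossing; I would justify it by passing to the closed torus via $\iota$ (where the statement is classical: smoothing a single crossing of two filling curves on $\Sigma_{1,0}$ gives a single essential curve, never a null-homotopic one) and noting that since no trivial loop appears upstairs either, the $\del$-discrepancy has nowhere to come from. Alternatively — and this is the cleanest route — one can simply invoke Proposition \ref{FrohmanGelca}: the displayed combination is divisible by $\eta$, so $\bigD{p}{q}{r}{s}$ is a well-defined skein, and then a crossing-number bound shows that in the basis $\mathcal M_T$ the product $\smallTvect{p}{q}\cdot\smallTvect{r}{s}$ is supported on multicurves $\smallTvect{x}{y}$ with no $\del$'s and with $|x|,|y|$ controlled, forcing $D$ (which would carry an extra factor of $\eta$, i.e. an extra $\del$) to vanish for degree reasons. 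I would present the direct resolution argument as the primary proof and remark on this alternative.
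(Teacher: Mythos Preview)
The paper leaves this lemma as an exercise for the reader and gives no proof, so there is nothing to compare against directly. Your proposal is correct and is the intended argument: the Chebyshev product identity $T_m(x)T_n(x)=T_{m+n}(x)+T_{|m-n|}(x)$ (valid with the paper's normalization $T_0=2$) handles the determinant-$0$ case as a polynomial identity in a single simple closed curve, and reduction via the mapping class group to the single-crossing computation $\smallpqvect{1}{0}\cdot\smallpqvect{0}{1}=A\,\smallpqvect{1}{1}+A^{-1}\smallpqvect{1}{-1}$ handles the determinant-$\pm1$ case. One small remark: your reduction in the $\pm1$ case invokes the mapping-class action on $D$, which is Lemma~\ref{Dehn} and appears \emph{after} Lemma~\ref{smallcrossings} in the paper; there is no circularity, but if you want to keep the logical order you can either compute $\smallpqvect{p}{q}\cdot\smallpqvect{r}{s}$ directly (still a single crossing, same outcome) or simply note that the action of $\mathrm{SL}_2(\Z)$ on curves is elementary and independent of everything else. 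Your ``alternative'' degree argument at the end is vaguer than the direct computation and not really needed; I would drop it.
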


\begin{lemma} \label{smallcrossings2}
 $\bigD{2}{0}{0}{1} = 0$ and  $\bigD{2}{1}{0}{1} = 1$
\end{lemma}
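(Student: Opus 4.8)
The plan is to compute the two products $\smallTvect{2}{0}\cdot\smallTvect{0}{1}$ and $\smallTvect{2}{1}\cdot\smallTvect{0}{1}$ explicitly and then read off the discrepancies from (\ref{def:D}). The key point is that both products can be assembled out of pieces with $|ps-rq|\le 1$, for which Lemma~\ref{smallcrossings} says the Frohman--Gelca formula already holds on the nose; iterating that formula a couple of times avoids resolving crossings at all. (For the first identity one could alternatively resolve by hand, as in Example~\ref{ex:holed}.)

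For $\bigD{2}{0}{0}{1}$ I would first write $\smallTvect{2}{0}=T_2\smallpqvect{1}{0}=\smallpqvect{2}{0}-2\emptyset=\smallpqvect{1}{0}\cdot\smallpqvect{1}{0}-2\emptyset$, so that $\smallTvect{2}{0}\cdot\smallTvect{0}{1}=\smallpqvect{1}{0}\cdot\smallpqvect{1}{0}\cdot\smallpqvect{0}{1}-2\,\smallpqvect{0}{1}$. Applying Lemma~\ref{smallcrossings} (each of the following has $|ps-rq|=1$) gives $\smallpqvect{1}{0}\cdot\smallpqvect{0}{1}=A\smallpqvect{1}{1}+A^{-1}\smallpqvect{1}{-1}$, and then $\smallpqvect{1}{0}\cdot\smallpqvect{1}{1}=A\smallpqvect{2}{1}+A^{-1}\smallpqvect{0}{1}$ and $\smallpqvect{1}{0}\cdot\smallpqvect{1}{-1}=A^{-1}\smallpqvect{2}{-1}+A\smallpqvect{0}{1}$ (using $\smallpqvect{0}{-1}=\smallpqvect{0}{1}$), whence $\smallpqvect{1}{0}\cdot\smallpqvect{1}{0}\cdot\smallpqvect{0}{1}=A^{2}\smallpqvect{2}{1}+A^{-2}\smallpqvect{2}{-1}+2\,\smallpqvect{0}{1}$ and therefore $\smallTvect{2}{0}\cdot\smallTvect{0}{1}=A^{2}\smallpqvect{2}{1}+A^{-2}\smallpqvect{2}{-1}$. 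Since $\gcd(2,\pm1)=1$ we have $\smallpqvect{2}{\pm1}=\smallTvect{2}{\pm1}$, and the relevant determinant is $2\cdot1-0\cdot0=2$ with $(p+r,q+s)=(2,1)$, $(p-r,q-s)=(2,-1)$, so comparing with (\ref{def:D}) gives $\eta\,\bigD{2}{0}{0}{1}=0$; since $\eta$ is a non-zero-divisor (which is exactly what makes $D$ well defined), $\bigD{2}{0}{0}{1}=0$. Alternatively, one resolves the two crossings of $\smallpqvect{2}{0}\cdot\smallpqvect{0}{1}$ directly: the two ``parallel'' smoothings contribute $A^{2}\smallpqvect{2}{1}$ and $A^{-2}\smallpqvect{2}{-1}$, and the two ``mixed'' ones each contribute a copy of $\smallpqvect{0}{1}$, giving the same total.

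For $\bigD{2}{1}{0}{1}$ the product is precisely the one worked out in Example~\ref{ex:holed}: $\smallpqvect{2}{1}\cdot\smallpqvect{0}{1}=A^{2}T_2(\smallpqvect{1}{1})+A^{-2}T_2(\smallpqvect{1}{0})+(\del+A^{2}+A^{-2})$, which in the present notation reads $\smallTvect{2}{1}\cdot\smallTvect{0}{1}=A^{2}\smallTvect{2}{2}+A^{-2}\smallTvect{2}{0}+\eta$ (using $\smallpqvect{2}{1}=\smallTvect{2}{1}$, $\smallpqvect{0}{1}=\smallTvect{0}{1}$, $T_2(\smallpqvect{1}{1})=\smallTvect{2}{2}$, $T_2(\smallpqvect{1}{0})=\smallTvect{2}{0}$ and $\del+A^2+A^{-2}=\eta$). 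Here the determinant is $2\cdot1-0\cdot1=2$ with $(p+r,q+s)=(2,2)$, $(p-r,q-s)=(2,0)$, so (\ref{def:D}) forces $\eta\,\bigD{2}{1}{0}{1}=\eta$, hence $\bigD{2}{1}{0}{1}=1$. The whole argument is bookkeeping --- which is why it is posed as an exercise --- and the only places where care is needed are the sign of the exponent ($ps-rq$, not $|ps-rq|$; cf.\ Example~\ref{ex:closed}), the identifications $\smallpqvect{p}{q}=\smallpqvect{-p}{-q}$ when collecting terms, and, on the by-hand route for the first identity, checking that none of the four resolutions of $\smallpqvect{2}{0}\cdot\smallpqvect{0}{1}$ contains a disk-bounding loop or a copy of $\del$ (in contrast with Example~\ref{ex:holed}).
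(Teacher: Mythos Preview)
Your argument is correct. The paper itself leaves this lemma as an exercise, so there is no proof to compare against; your method of building $\smallTvect{2}{0}\cdot\smallTvect{0}{1}$ from determinant-$1$ pieces via Lemma~\ref{smallcrossings}, and invoking Example~\ref{ex:holed} verbatim for $\smallTvect{2}{1}\cdot\smallTvect{0}{1}$, is a clean way to do the exercise, and all the sign and identification bookkeeping checks out.
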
 

The following is also well-known, so we will only provide a sketch proof.   

\begin{proposition} \label{q=0}
 $D \begin{pmatrix} p& 0\\0&1 \end{pmatrix}   = 0 $
 \end{proposition}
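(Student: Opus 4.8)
The goal is to show $\smallD{p}{0}{0}{1} = 0$, i.e. that on the one-hole torus
\[
\Tvect{p}{0} \cdot \Tvect{0}{1} = A^{p} \Tvect{p}{1} + A^{-p} \Tvect{p}{-1},
\]
with no $\eta$-correction term. My plan is to interpret the left side geometrically as a superposition of a $(p,0)$-curve threaded by a Chebyshev polynomial on top of the single $(0,1)$-curve, and to argue that every resolution of the resulting crossings produces only torus links — never a trivial loop bounding a disk, and never a peripheral loop $\del$. Since the discrepancy measures exactly the failure to match the closed-torus Product-to-Sum formula, and that failure (by Examples~\ref{ex:closed} and~\ref{ex:holed}) is entirely due to disk-bounding loops and $\del$-loops arising in resolutions, showing none appear forces $D=0$.

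Concretely, I would first reduce to the case $p \geq 1$ (the cases $p \in \{0,1\}$ being covered by Lemma~\ref{smallcrossings}, and $\gcd(p,0)=p$ so $\smallTvect{p}{0} = T_p(\smallpqvect{1}{0})$ is the $p$-fold cable of the single horizontal curve threaded by $T_p$). Then I would set up the standard picture: $\smallpqvect{1}{0}$ and $\smallpqvect{0}{1}$ meet in a single point, so a threaded horizontal curve and the vertical curve meet in $p$ crossings, all occurring in a neighborhood of that one intersection point. The key combinatorial observation is that in this ``bigon-free'' configuration the relevant resolutions are controlled: resolving the crossings between a $(p,0)$-cable and a $(0,1)$-curve, all of the smoothings that survive the Chebyshev threading give $(p,\pm 1)$ multicurves (again threaded appropriately), matching exactly the two terms on the right. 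This is precisely the local computation that underlies Frohman--Gelca's proof on the closed torus; the point here is that on $\Sigma_{1,1}$ the same local picture occurs near the single intersection point, and crucially no strand of the resolution can be pushed across the puncture to create a $\del$ loop, nor can any closed component be nullhomotopic since all components remain of slope $0$ or $\pm\infty$-direction, hence essential.

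Alternatively — and this is probably the cleaner route — I would use Proposition~\ref{FrohmanGelca} to know $\eta \mid \smallD{p}{0}{0}{1}\cdot\eta$ type statements are automatic, and instead compute $\smallD{p}{0}{0}{1}$ directly by an induction on $p$ using the Chebyshev recursion $T_p = x\,T_{p-1} - T_{p-2}$ applied to the threading curve, together with Lemma~\ref{smallcrossings2} ($\smallD{2}{0}{0}{1}=0$) and Lemma~\ref{smallcrossings} as base cases. The recursion expresses $\smallTvect{p}{0}\cdot\smallTvect{0}{1}$ in terms of $\smallTvect{p-1}{0}\cdot\smallpqvect{1}{0}\cdot\smallTvect{0}{1}$ and $\smallTvect{p-2}{0}\cdot\smallTvect{0}{1}$; moving the central-ish factor $\smallpqvect{1}{0}$ past $\smallTvect{0}{1}$ costs only a $\smallD{1}{0}{0}{1}=0$ term (Lemma~\ref{smallcrossings}, since $|1\cdot1-0\cdot0|=1$), and then everything collapses to lower-$p$ discrepancies which vanish by the inductive hypothesis, with the $A^{\pm}$ powers bookkeeping out to give $D=0$ rather than some multiple of $\eta$.

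The main obstacle is the bookkeeping of exponents of $A$ and the Chebyshev-index shifts when sliding $\smallpqvect{1}{0}$ through the product and re-expressing $\smallpqvect{1}{0}\smallTvect{p-1}{0}$ in terms of $\smallTvect{p}{0}$ and $\smallTvect{p-2}{0}$: one must verify that the ``error'' terms all land inside combinations that are already known to have zero discrepancy, rather than merely inside the ideal $(\eta)$. Concretely I expect to need the identity $\smallpqvect{1}{0}\cdot\smallTvect{p-1}{0} = \smallTvect{p}{0} + \smallTvect{p-2}{0}$ (the defining Chebyshev relation read backwards, valid since these curves are parallel and do not cross), and then to check that the resulting four-fold product of discrepancy terms telescopes. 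Since the claim is stated as ``well-known'' with only a sketch expected, carrying the induction through to the explicit cancellation is the one place where care is required; everything else is the observation that a single transverse intersection cannot generate either a disk-bounding loop or a peripheral loop in any resolution.
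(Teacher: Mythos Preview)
Your first sketch is essentially the paper's argument, though the paper's key observation is sharper: for each $k$, the $k$ crossings in $\smallpqvect{k}{0}\cdot\smallpqvect{0}{1}$ lie along a single vertical line and cut the one-hole torus into $k$ regions, and any resolution of them connects all regions into one connected non-separating curve (hence neither $\del$ nor trivial). Your claim that resolved components keep ``slope $0$ or $\pm\infty$'' is not right --- the resolutions produce curves of other slopes --- but the conclusion you need (no $\del$ appears) is the same, and once no $\del$ appears the product lies in the span of torus links, on which the map to $\skein(\Sigma_{1,0})$ is injective, so Frohman--Gelca forces $D=0$.

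Your inductive route is a genuinely different approach and also works, with one correction in the order of operations. After writing $\smallTvect{p}{0}\cdot\smallTvect{0}{1} = \smallpqvect{1}{0}\cdot\bigl(\smallTvect{p-1}{0}\cdot\smallTvect{0}{1}\bigr) - \smallTvect{p-2}{0}\cdot\smallTvect{0}{1}$, you should first apply the induction hypothesis to the inner product $\smallTvect{p-1}{0}\cdot\smallTvect{0}{1} = A^{p-1}\smallTvect{p-1}{1}+A^{-(p-1)}\smallTvect{p-1}{-1}$, and only then multiply by $\smallpqvect{1}{0}$: the relevant discrepancies are $\smallD{1}{0}{p-1}{\pm 1}$, which vanish by Lemma~\ref{smallcrossings} since their determinants are $\pm 1$, and the four resulting $\smallTvect{p-2}{\pm 1}$ terms cancel exactly against the $\smallTvect{p-2}{0}\cdot\smallTvect{0}{1}$ piece. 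Doing it in your stated order (computing $\smallpqvect{1}{0}\cdot\smallTvect{0}{1}$ first) leaves you with $\smallTvect{p-1}{0}\cdot\smallTvect{1}{\pm 1}$, whose discrepancy has determinant $\pm(p-1)$ and is not handled by Lemma~\ref{smallcrossings}; it can still be reduced to the induction hypothesis via the Dehn twist of Lemma~\ref{Dehn}, but that is an extra step you did not mention. The paper's geometric argument is shorter; your induction trades the picture for bookkeeping, which is fine once the right discrepancy is invoked.
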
 
\begin{proof} 
 By definition, 
$ \smallpqvect{p}{0}_T \cdot \smallTvect{0}{1} =  T_p \left(  \smallpqvect{1}{0}  \right) \cdot \smallpqvect{0}{1} $ is a linear combination of products of torus links $\smallpqvect{k}{0} \cdot \smallpqvect{0}{1}$ where $0 \leq k \leq p$.   
In each $\smallpqvect{k}{0} \cdot \smallpqvect{0}{1}$, there are $k$ vertical crossings, which separate  the one-hole torus into $k$ distinct regions.  After resolution of the crossings by the skein relation, the regions are all connected together, and each resolution yields a single non-separating loop, which cannot be $\del$. Since no term of $ \smallpqvect{p}{0}_T \cdot \smallTvect{0}{1}$ contains $\del$,  the discrepancy is zero. 
\end{proof}

\section{The Recursion Rule and transformations on the one-hole torus}

In this section, we gather the tools needed to compute the discrepancy.  The main innovation in this paper is the Recursion Rule.  A special case appears in unpublished work \cite{FrohmanBloomquist}, but to the knowledge of the authors, this general version of the Recursion Rule  is new. All other tools in this section are previously known.

\subsection{The Recursion Rule}

\begin{theorem}[Recursion Rule] \label{assoc}
\begin{align*}
\begin{pmatrix} u\\v \end{pmatrix}_T \cdot D\begin{pmatrix} p&r\\q&s \end{pmatrix}
& + A^{|\begin{smallmatrix}p&r\\q&s \end{smallmatrix}|} D\begin{pmatrix} u & p+r\\v & q+s \end{pmatrix} + A^{- | \begin{smallmatrix}p&r\\q&s \end{smallmatrix}|} D \begin{pmatrix} u & p-r\\v & q-s \end{pmatrix} \\
&= D \begin{pmatrix} u & p\\v & q \end{pmatrix}  \cdot \begin{pmatrix} r\\s \end{pmatrix}_T
+ A^{|\begin{smallmatrix}u&p\\v&q \end{smallmatrix}|} D \begin{pmatrix} u+p & r\\v+q & s \end{pmatrix} + A^{-|\begin{smallmatrix}u&p\\v&q \end{smallmatrix}|} D \begin{pmatrix} u-p & r\\v-q & s \end{pmatrix}
\end{align*}
\end{theorem}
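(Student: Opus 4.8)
\quad The plan is to use associativity of multiplication in $\skein(\Sigma_{1,1})$, applied to the triple product $\Tvect{u}{v}\cdot\Tvect{p}{q}\cdot\Tvect{r}{s}$. Write $a=uq-vp$, $b=us-vr$, $c=ps-qr$ for the three signed $2\times2$ determinants built from pairs of the columns, and use (\ref{def:D}) in the form $\Tvect{x}{y}\cdot\Tvect{z}{w}=A^{xw-yz}\Tvect{x+z}{y+w}+A^{yz-xw}\Tvect{x-z}{y-w}+\eta\,\bigD{x}{y}{z}{w}$. To expand $\Tvect{u}{v}\cdot\bigl(\Tvect{p}{q}\cdot\Tvect{r}{s}\bigr)$, apply (\ref{def:D}) to the inner product, distribute $\Tvect{u}{v}$ and the central scalars $A^{\pm c}$, $\eta$ across the three resulting summands, and apply (\ref{def:D}) again to $\Tvect{u}{v}\cdot\Tvect{p+r}{q+s}$ and to $\Tvect{u}{v}\cdot\Tvect{p-r}{q-s}$. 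To expand $\bigl(\Tvect{u}{v}\cdot\Tvect{p}{q}\bigr)\cdot\Tvect{r}{s}$, carry out the mirror computation, applying (\ref{def:D}) first to $\Tvect{u}{v}\cdot\Tvect{p}{q}$ and then to $\Tvect{u+p}{v+q}\cdot\Tvect{r}{s}$ and $\Tvect{u-p}{v-q}\cdot\Tvect{r}{s}$.

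Both expansions produce output of the same shape: a sum of four ``Chebyshev'' vectors $\Tvect{u\pm p\pm r}{v\pm q\pm s}$ with scalar coefficients, plus $\eta$ times a sum of three discrepancies. The crux is that the Chebyshev parts coincide. Using only that a $2\times2$ determinant is additive in each column (so that, for instance, $\det\left(\begin{smallmatrix}u+p & r\\ v+q & s\end{smallmatrix}\right)=b+c$ and $\det\left(\begin{smallmatrix}u & p+r\\ v & q+s\end{smallmatrix}\right)=a+b$), one checks that in \emph{both} expansions the vector $\Tvect{\sigma_1 u+\sigma_2 p+\sigma_3 r}{\sigma_1 v+\sigma_2 q+\sigma_3 s}$ (taking $\sigma_1=+1$ and $\sigma_2,\sigma_3\in\{\pm1\}$) appears with coefficient $A^{\sigma_1\sigma_2 a+\sigma_1\sigma_3 b+\sigma_2\sigma_3 c}$. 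Hence these four terms cancel when the two expansions are equated, and what remains is
\begin{align*}
&\eta\Bigl(\Tvect{u}{v}\cdot\bigD{p}{q}{r}{s}+A^{c}\,\bigD{u}{v}{p+r}{q+s}+A^{-c}\,\bigD{u}{v}{p-r}{q-s}\Bigr)\\
&\qquad=\eta\Bigl(\bigD{u}{v}{p}{q}\cdot\Tvect{r}{s}+A^{a}\,\bigD{u+p}{v+q}{r}{s}+A^{-a}\,\bigD{u-p}{v-q}{r}{s}\Bigr).
\end{align*}
Since $\eta$ is a non-zero-divisor in $\skein(\Sigma_{1,1})$ --- this regularity is already implicit in the definition of the discrepancy, and in any case $\skein(\Sigma_{1,1})$ is an integral domain --- we may cancel $\eta$ from both sides, which is exactly the Recursion Rule.

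The only step that requires genuine care, and where a slip is easiest to make, is the sign bookkeeping. I have written everything with signed determinants $a,b,c$, whereas the statement (and (\ref{def:D})) are phrased with $|\,\cdot\,|$; reconciling the two amounts to fixing the convention on representatives and invoking that $D$ is unchanged under negating either column, i.e. $\bigD{-u}{-v}{p}{q}=\bigD{u}{v}{p}{q}=\bigD{u}{v}{-p}{-q}$, which itself follows from (\ref{def:D}) together with $\Tvect{p}{q}=\Tvect{-p}{-q}$. Concretely, after orienting the second vector of each pair so that its determinant against the first is non-negative, one verifies that the coefficient written above as $A^{c}$ is $A^{|ps-qr|}$, that $A^{a}$ is $A^{|uq-vp|}$, and that the four Chebyshev labels are matched consistently between the two expansions. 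This is routine but is the one point at which a sign error would invalidate the argument; I would do it by writing out both expansions term by term against a single fixed convention. Beyond (\ref{def:D}) --- hence beyond Proposition \ref{FrohmanGelca} --- and the regularity of $\eta$, no further skein-theoretic input is needed.
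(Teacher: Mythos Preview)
Your proof is correct and follows exactly the paper's approach: expand both groupings of the associative triple product $\Tvect{u}{v}\cdot\Tvect{p}{q}\cdot\Tvect{r}{s}$ using Equation~(\ref{def:D}), observe that the Chebyshev terms cancel, and equate the remaining $\eta$-multiples. One small remark: in this paper the symbol $\bigl|\begin{smallmatrix}p&r\\q&s\end{smallmatrix}\bigr|$ denotes the signed determinant $ps-qr$ (standard vertical-bar notation for a determinant), not its absolute value, so the sign-reconciliation you worry about in the last paragraph is not actually needed.
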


\begin{proof}
Expand the  associativity rule  $\smallpqvect{u}{v}_T \cdot  \left( \smallpqvect{p}{q}_T \cdot \smallTvect{r}{s} \right) = \left( \smallpqvect{u}{v}_T \cdot   \smallpqvect{p}{q}_T \right) \cdot \smallTvect{r}{s} $.  Applying   Equation (\ref{def:D}), the result follows immediately.  
\end{proof}

\subsection{Reductions under Dehn twists} \label{mcg}

The mapping class group of the one-hole torus is isomorphic to $\SL(2, \Z)$, and generated by Dehn twists.  A mapping class $\phi$  induces an action~$\phi_*$ on $\Sigma_{1,1}$.  Specifically,  if $\phi$ is represented by the matrix $\begin{pmatrix}a & c\\ b& d \end{pmatrix} \in \SL(2, \Z)$, then $\phi_*$ acts by matrix multiplication on the torus links, with $\phi_* \pqvect{p}{q} = \begin{pmatrix}a & c\\ b& d \end{pmatrix} \cdot \pqvect{p}{q} = \pqvect{ap + cq}{bp + dq}$.  Since $\phi$ fixes the puncture, $\phi_*(\eta) = \eta$.   It immediately follows that $\phi_*$ also acts on the discrepancy.  

\begin{lemma}[Reduction by Mapping Class] \label{Dehn}
Let a mapping class $\phi$ be represented by  $\begin{pmatrix}a & c\\ b& d \end{pmatrix} \in \SL(2, \Z)$. Then for any $p, q, r, s \in \Z$, 
\[ \phi_* \left(\bigD{p}{q}{r}{s} \right)=  D\left( \phi_*  \begin{pmatrix}p & r\\ q& s \end{pmatrix} \right) = D \left( \begin{matrix} ap + cq & cr + ds\\ ar + cs &br + ds \end{matrix} \right).\]  
\end{lemma}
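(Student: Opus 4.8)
The plan is to use that the mapping class action $\phi_*$ is a $\laurent$-algebra automorphism of $\skein(\Sigma_{1,1})$ that fixes $\eta$, and to apply it to the defining relation~(\ref{def:D}). Write $\phi = \smmatrix{a&c\\b&d}$ and set $c_0 = |\begin{smallmatrix}p&r\\q&s\end{smallmatrix}|$. Applying $\phi_*$ to
\[ \Tvect{p}{q}\cdot\Tvect{r}{s} = A^{c_0}\,\Tvect{p+r}{q+s} + A^{-c_0}\,\Tvect{p-r}{q-s} + \eta\,\bigD{p}{q}{r}{s}, \]
and using that $\phi_*$ is $\laurent$-linear, multiplicative, and fixes $\eta$, yields
\[ \phi_*\Tvect{p}{q}\cdot\phi_*\Tvect{r}{s} = A^{c_0}\,\phi_*\Tvect{p+r}{q+s} + A^{-c_0}\,\phi_*\Tvect{p-r}{q-s} + \eta\,\phi_*\!\left(\bigD{p}{q}{r}{s}\right). \]
The key point (justified below) is that $\phi_*\Tvect{p}{q} = \Tvect{p'}{q'}$ where $\smallpqvect{p'}{q'} = \phi\,\smallpqvect{p}{q}$. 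Since $\phi$ is linear, $\phi\,\smallpqvect{p\pm r}{q\pm s}$ is the corresponding $\pm$-combination of $\phi\,\smallpqvect{p}{q}$ and $\phi\,\smallpqvect{r}{s}$, and since $\det\phi = 1$ the determinant $c_0$ of the pair of columns is unchanged. Hence the displayed equation is exactly the instance of~(\ref{def:D}) that defines $D\!\left(\phi\,\smmatrix{p&r\\q&s}\right)$, and comparing the two expressions and cancelling $\eta$ (legitimate in the same sense that makes $D$ well-defined, namely that $\eta$ is not a zero divisor) gives $\phi_*\!\left(\bigD{p}{q}{r}{s}\right) = D\!\left(\phi\,\smmatrix{p&r\\q&s}\right)$, which is the assertion once the matrix product is written out entrywise.

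The facts about $\phi_*$ invoked above are standard. The homeomorphism $\phi\times\mathrm{id}$ of $\Sigma_{1,1}\times I$ preserves the Kauffman bracket skein relations and carries the superposition product to itself (it is the identity on the $I$-factor), and it is invertible, so $\phi_*$ is a $\laurent$-algebra automorphism of $\skein(\Sigma_{1,1})$; since $\phi$ fixes the boundary it fixes the peripheral loop $\del$, hence $\phi_*(\eta)=\eta$. For the action on the basis, $\phi_*$ carries the $(p,q)$-torus link to the torus link of slope $\phi\,\smallpqvect{p}{q}$, because it takes a simple closed curve of slope $q/p$ to one of the transformed slope and takes disjoint parallel copies to disjoint parallel copies; as $\phi\in\SL(2,\Z)$ preserves $\gcd$ the number of components is unchanged, and as $\phi_*$ comes from a homeomorphism it intertwines taking parallel cables, hence commutes with threading by any polynomial — in particular by $T_{\gcd(p,q)}$. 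This gives $\phi_*\smallTvect{p}{q} = \smallTvect{p'}{q'}$ (and $\phi_*\emptyset = \emptyset$, $\phi_*(\eta^k x) = \eta^k\phi_*x$), which is what was used.

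The only step that is not purely formal is this last compatibility: one must observe both that $\SL(2,\Z)$ preserves $\gcd$ (so that the primitive vector to which $T_{\gcd}$ is applied stays integral after transforming) and that threading by Chebyshev polynomials is natural with respect to homeomorphisms. Granting those, the computation of the first paragraph goes through without further difficulty. As a bookkeeping remark, the entrywise form of $\phi\,\smmatrix{p&r\\q&s} = \smmatrix{a&c\\b&d}\smmatrix{p&r\\q&s}$ is $\smmatrix{ap+cq & ar+cs\\ bp+dq & br+ds}$, which is the matrix $D(\,\cdot\,)$ on the right-hand side of the statement.
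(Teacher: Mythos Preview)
Your proof is correct and is precisely the argument the paper has in mind: the paragraph preceding the lemma records that $\phi_*$ acts by matrix multiplication on torus links and fixes $\eta$, after which the paper simply asserts that the action on the discrepancy ``immediately follows,'' and you have written out that deduction carefully (including the points about $\SL(2,\Z)$ preserving $\gcd$ and the determinant, and naturality of threading). As a side note, your computed matrix product $\smmatrix{ap+cq & ar+cs\\ bp+dq & br+ds}$ is the correct one --- the off-diagonal entries displayed in the paper's statement appear to be typos.
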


By the Euclidean Algorithm, we may always find $\phi$ so that $\phi_* \left( \bigD{p_0}{q_0}{r_0}{s_0} \right) = \bigD{p}{q}{0}{s}$ where $ 0\leq q < p$.  Since $\smallpqvect{0}{s} = \smallpqvect{0}{-s}$, we may assume  $s \geq 0$.  Note that if $s=0$ or $q = 0$, then the discrepancy is zero by Lemmas \ref{smallcrossings} and \ref{q=0}.   So from now on, we will take $0< q <p$ and $r=0$, and we will split our analysis into the cases where $s=1$ and where $s \geq 2$. For future reference, we  give the forms of the Recursion Rule that we will apply to these two cases.    

\begin{corollary}[Recursion Rule to increase $p$ when $r=0$ and $s = 1$] \label{1-recursion}
For $p, q \in \Z$, 
\begin{eqnarray*}
 \bigD{p+1}{q}{0}{1} 
  &=& A^{-q} \Tvect{1}{0} \cdot  \bigD{p}{q}{0}{1} - A^{-2q} \bigD{p-1}{q}{0}{1}  \\
 & &   + A^{-p-q} \bigD{1}{0}{p}{q-1}   - A^{-q} \bigD{1}{0}{p}{q} \cdot \Tvect{0}{1}       + A^{p-q} \bigD{1}{0}{p}{q+1}  
 \end{eqnarray*}
\end{corollary}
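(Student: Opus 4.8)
The plan is to derive Corollary~\ref{1-recursion} directly from the Recursion Rule (Theorem~\ref{assoc}) by a single specialization followed by an elementary rearrangement; no new idea is needed, since the corollary is merely the convenient restatement of Theorem~\ref{assoc} adapted to the case $r=0$, $s=1$. Concretely, I would apply Theorem~\ref{assoc} with $(u,v)=(1,0)$ and $(r,s)=(0,1)$, leaving $p$ and $q$ free. The two determinants that occur are $|\begin{smallmatrix} p&0\\q&1\end{smallmatrix}| = p$ and $|\begin{smallmatrix} 1&p\\0&q\end{smallmatrix}| = q$, so every power of $A$ that appears is $A^{\pm p}$ or $A^{\pm q}$, and Theorem~\ref{assoc} specializes to
\begin{align*}
& \Tvect{1}{0} \cdot \bigD{p}{q}{0}{1} + A^{p} \bigD{1}{0}{p}{q+1} + A^{-p} \bigD{1}{0}{p}{q-1} \\
& \qquad = \bigD{1}{0}{p}{q} \cdot \Tvect{0}{1} + A^{q} \bigD{p+1}{q}{0}{1} + A^{-q} \bigD{1-p}{-q}{0}{1}.
\end{align*}

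The only term not yet in the form used in the statement is $\bigD{1-p}{-q}{0}{1}$, and I would rewrite it using the fact that $D$ is invariant under negating a column of its matrix argument. Indeed, negating the first column of $\begin{pmatrix} p&r\\q&s \end{pmatrix}$ fixes $\Tvect{p}{q}$ (since $\Tvect{-p}{-q}=\Tvect{p}{q}$), interchanges the vectors $\smallTvect{p+r}{q+s}$ and $\smallTvect{p-r}{q-s}$, and, because $|\begin{smallmatrix} p&r\\q&s \end{smallmatrix}| = ps-rq$ is the \emph{signed} determinant, negates the exponents $\pm|\begin{smallmatrix} p&r\\q&s \end{smallmatrix}|$ of $A$; these last two sign changes cancel, so the Frohman--Gelca expression---and hence $D$---is unchanged (here one uses that $\eta$ is not a zero divisor, which is implicit in the definition of $D$, and Proposition~\ref{FrohmanGelca} ensures the discrepancy is defined for all integer inputs). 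In particular $\bigD{1-p}{-q}{0}{1} = \bigD{-(p-1)}{-q}{0}{1} = \bigD{p-1}{q}{0}{1}$.

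Finally I would solve the displayed identity for $\bigD{p+1}{q}{0}{1}$: substitute $\bigD{1-p}{-q}{0}{1} = \bigD{p-1}{q}{0}{1}$, move $\bigD{1}{0}{p}{q}\cdot\Tvect{0}{1}$ and $A^{-q}\bigD{p-1}{q}{0}{1}$ to the other side, isolate $A^{q}\bigD{p+1}{q}{0}{1}$, and multiply through by $A^{-q}$. Grouping the five resulting terms reproduces the statement verbatim. There is no genuine obstacle here: all of the mathematical content sits in Theorem~\ref{assoc}, and the only step needing care is the column-negation identity (where the signed-determinant reading of $|\begin{smallmatrix}p&r\\q&s\end{smallmatrix}|$ matters) together with the routine bookkeeping of the $A$-exponents.
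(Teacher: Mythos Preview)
Your proposal is correct and follows exactly the route the paper intends: the paper states Corollary~\ref{1-recursion} without proof as a direct specialization of Theorem~\ref{assoc} with $(u,v)=(1,0)$ and $(r,s)=(0,1)$, which is precisely what you carry out. Your explicit justification of the column-negation identity $\smallD{1-p}{-q}{0}{1}=\smallD{p-1}{q}{0}{1}$ is the one detail the paper leaves implicit (relying on the convention $\smallpqvect{p}{q}=\smallpqvect{-p}{-q}$), and your handling of it is correct.
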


\begin{corollary}[Recursion Rule to increase $s$ when $r=0$] \label{s-recursion}
For any $p, q, s \in \Z$, 
\begin{eqnarray*}
\bigD{p}{q}{0}{s+1} &=& \bigD{p}{q}{0}{s} \cdot \Tvect{0}{1} - \bigD{p}{q}{0}{s-1} \\
& & + A^{ps} \bigD{p}{q+s}{0}{1} +A^{-ps} \bigD{p}{q-s}{0}{1}  
\end{eqnarray*} 

\end{corollary}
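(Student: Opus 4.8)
The plan is to obtain Corollary~\ref{s-recursion} as a direct specialization of the Recursion Rule (Theorem~\ref{assoc}). To avoid a clash with the variables $p,q,s$ of the corollary, first rewrite Theorem~\ref{assoc} with its six parameters renamed, say to $a,b,c,d,e,f$, so that the rule reads as an identity among Frohman-Gelca discrepancies built from the three vectors $\smallpqvect{a}{b}$, $\smallpqvect{c}{d}$, $\smallpqvect{e}{f}$, with the threaded curve $\smallTvect{a}{b}$ occurring as a left factor and $\smallTvect{e}{f}$ as a right factor.

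Now specialize by setting $(a,b)=(p,q)$, $(c,d)=(0,s)$, and $(e,f)=(0,1)$, where $p,q,s$ are the corollary's variables. The two determinant exponents appearing in the Recursion Rule become $\left|\begin{smallmatrix}c&e\\d&f\end{smallmatrix}\right|=\left|\begin{smallmatrix}0&0\\s&1\end{smallmatrix}\right|=0$ and $\left|\begin{smallmatrix}a&c\\b&d\end{smallmatrix}\right|=\left|\begin{smallmatrix}p&0\\q&s\end{smallmatrix}\right|=ps$, so the rule becomes
\begin{align*}
\Tvect{p}{q}\cdot\bigD{0}{s}{0}{1}+\bigD{p}{q}{0}{s+1}+\bigD{p}{q}{0}{s-1}
&=\bigD{p}{q}{0}{s}\cdot\Tvect{0}{1}\\
&\quad+A^{ps}\bigD{p}{q+s}{0}{1}+A^{-ps}\bigD{p}{q-s}{0}{1}.
\end{align*}
The discrepancy $\bigD{0}{s}{0}{1}$ corresponds to the product of two parallel curves; its defining matrix has determinant $0$, so it vanishes by Lemma~\ref{smallcrossings}, and the first term on the left drops out. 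Solving the remaining equation for $\bigD{p}{q}{0}{s+1}$ gives exactly the asserted formula. Since Theorem~\ref{assoc} holds for arbitrary integer parameters, so does the resulting identity, which matches the stated range $p,q,s\in\Z$.

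I expect no genuine obstacle here: the content is simply a correct instantiation of Theorem~\ref{assoc}, and the only care needed is in the bookkeeping---keeping track of which of the three vectors plays which role, so that $\smallTvect{0}{1}$ winds up in the right-multiplication slot and the signs on the $A^{\pm ps}$ terms come out correctly. As an independent check, one can bypass Theorem~\ref{assoc} and instead expand the associativity relation $\smallTvect{p}{q}\cdot\bigl(\smallTvect{0}{s}\cdot\smallTvect{0}{1}\bigr)=\bigl(\smallTvect{p}{q}\cdot\smallTvect{0}{s}\bigr)\cdot\smallTvect{0}{1}$ by applying Equation~(\ref{def:D}) to each product, together with the Chebyshev recursion $\smallTvect{0}{s+1}=\smallTvect{0}{1}\cdot\smallTvect{0}{s}-\smallTvect{0}{s-1}$ (valid since $\smallTvect{0}{s}$ and $\smallTvect{0}{1}$ are threadings of parallel curves and therefore commute); after the Frohman-Gelca main terms cancel on both sides and one divides by~$\eta$, the same recursion emerges. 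Either route is a short computation once the substitutions are fixed.
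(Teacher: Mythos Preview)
Your proposal is correct and is exactly the approach intended by the paper: Corollary~\ref{s-recursion} is stated without a separate proof because it is a direct specialization of Theorem~\ref{assoc} with $(u,v)=(p,q)$, the middle vector $(0,s)$, and the right vector $(0,1)$, together with Lemma~\ref{smallcrossings} to kill $\smallD{0}{s}{0}{1}$. Your bookkeeping (determinants $0$ and $ps$, placement of $\smallTvect{0}{1}$ on the right) is accurate, and the alternative associativity check you sketch is just Theorem~\ref{assoc}'s own proof redone in this special case.
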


\subsection{Reducing discrepancy using symmetries} \label{transf}

Symmetries of $\Sigma_{1,1}$, especially rotations and  reflections,   induce simple linear algebraic transformations in the torus knots basis for the skein algebra, and these carry over to the discrepancy.  
We introduce some language to describe some of the actions that we use most frequently.

\begin{definition}
For a skein  $K =  \displaystyle \sum_{\mathcal M_T}  c_{k,i,j} \cdot \eta^k \Tvect{i}{j}$ with  $c_{k, i, j} \in \Z[A, A^{-1}]$,
define:
\\
 its \emph{twist} to be 
\[ \mathrm{tw}(K) = \displaystyle \sum_{\mathcal M_T}  c_{k,i,j} \cdot  \eta^k \Tvect{i}{j+i}, \]
 its \emph{opposition} to be 
\[ \mathrm{opp}(K) = \displaystyle \sum_{\mathcal M_T}  \overline{c_{k,i,j}} \cdot  \eta^k \Tvect{i}{i-j},  \]
and  its \emph{reverse} to be 
\[ \mathrm{rev}(K) = \displaystyle \sum_{\mathcal M_T}  c_{k,i,j} \cdot  \eta^k \Tvect{j}{i}, \]
where $\overline{c_{k, i, j}}$ denotes the conjugate, where $A$ and $A^{-1}$ are exchanged.  
\end{definition}


Remark that $\tw^{-1} \left(  \sum_{\mathcal M_T}  c_{k,i,j} \cdot \eta^k \Tvect{i}{j} \right) = \sum_{\mathcal M_T}  c_{k,i,j} \cdot \eta^k \Tvect{i}{j-i}$ whereas $\opp = \opp^{-1}$ and $\rev = \rev^{-1}$.

\begin{lemma}[Twist] \label{tw}
$\tw( \bigD{p}{q}{0}{1}) = \bigD{p}{p+q}{0}{1} $ 
\end{lemma}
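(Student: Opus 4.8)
The plan is to realize $\tw$ as the skein-algebra action of a particular Dehn twist and then quote Lemma~\ref{Dehn}. Let $\phi$ be the mapping class represented by $\smmatrix{1 & 0\\ 1 & 1}\in\SL(2,\Z)$, so that $\phi_*\pqvect{i}{j}=\pqvect{i}{i+j}$ by the description of the mapping class group action. First I would verify that $\phi_*$ acts on the Chebyshev basis exactly the way $\tw$ is defined to act. Indeed, $\phi_*$ is an algebra homomorphism induced by a homeomorphism, so it commutes with cabling and hence with threading by the $T_n$'s; since $\smmatrix{1 & 0\\ 1 & 1}$ preserves gcd (because $\gcd(i,i+j)=\gcd(i,j)$), this gives $\phi_*\Tvect{i}{j}=\Tvect{i}{i+j}$. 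Moreover $\phi_*(\eta)=\eta$ because $\phi$ fixes the puncture, and $\phi_*$ is $\laurent$-linear. Consequently, for any $K=\sum_{\mathcal M_T}c_{k,i,j}\,\eta^k\Tvect{i}{j}$ one has $\phi_*(K)=\sum_{\mathcal M_T}c_{k,i,j}\,\eta^k\Tvect{i}{i+j}=\tw(K)$, i.e.\ $\tw=\phi_*$ as operators on $\skein(\Sigma_{1,1})$.

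With this identification in hand, the lemma is immediate. Applying $\phi_*$ column by column to $\smmatrix{p & 0\\ q & 1}$ sends the first column $\pqvect{p}{q}$ to $\pqvect{p}{p+q}$ and fixes the second column $\pqvect{0}{1}$, so $\phi_*\smmatrix{p & 0\\ q & 1}=\smmatrix{p & 0\\ p+q & 1}$. Hence Lemma~\ref{Dehn} yields
\[
\tw\!\left(\bigD{p}{q}{0}{1}\right)=\phi_*\!\left(\bigD{p}{q}{0}{1}\right)=D\!\left(\phi_*\smmatrix{p & 0\\ q & 1}\right)=\bigD{p}{p+q}{0}{1},
\]
which is the claim.

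The only delicate point --- and it is a bookkeeping verification rather than a genuine obstacle --- is the identification $\tw=\phi_*$: this uses that $\mathcal M_T$ is an honest basis, so the coefficients $c_{k,i,j}$ and therefore the operator $\tw$ are well defined, together with the fact that a $\laurent$-linear algebra homomorphism is determined by its values on torus links (and thus on $\mathcal M_T$). Both facts are available from the background section, after which nothing remains beyond the $2\times 2$ matrix multiplication above.
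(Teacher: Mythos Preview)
Your proof is correct and follows exactly the same route as the paper: the paper's proof is the single sentence ``This is a direct corollary of Lemma~\ref{Dehn} with $\smmatrix{1 & 0\\ 1 & 1}\in\SL(2,\Z)$.'' You have simply spelled out the implicit identification $\tw=\phi_*$ (checking that $\phi_*$ preserves $\eta$, preserves $\gcd$, and hence sends $\smallTvect{i}{j}$ to $\smallTvect{i}{i+j}$), which the paper takes for granted.
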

\begin{proof}
This is a direct corollary of Lemma \ref{Dehn}  with $\begin{pmatrix}1 & 0\\1& 1 \end{pmatrix} \in \SL(2, \Z)$. 
\end{proof}

\begin{lemma}[Opposition] \label{opp}
$\opp( \bigD{p}{q}{0}{1}) = \bigD{p}{p-q}{0}{1} $ 
\end{lemma}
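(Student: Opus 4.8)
The statement to prove is the Opposition Lemma: $\opp(\bigD{p}{q}{0}{1}) = \bigD{p}{p-q}{0}{1}$.

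The plan is to identify the operation $\opp$ with the induced action of an orientation-reversing symmetry of $\Sigma_{1,1}$ on the skein algebra, analogous to how the Twist lemma (Lemma \ref{tw}) followed from the orientation-preserving mapping class $\smmatrix{1&0\\1&1}$ via Lemma \ref{Dehn}. Here, the complication is that $\opp$ involves conjugating the coefficients (exchanging $A$ and $A^{-1}$), which signals that the relevant symmetry reverses orientation rather than preserving it. Concretely, I would use the reflection of the square representing $\Sigma_{1,1}$ that sends the curve slope $q/p$ to $(p-q)/p$ — for instance, the involution of $\Sigma_{1,1}$ whose action on homology is $\smmatrix{1&0\\1&-1}$ (which has determinant $-1$), combined with the fact that an orientation-reversing homeomorphism of the thickened surface sends the skein relation with parameter $A$ to the skein relation with parameter $A^{-1}$, hence acts on $\skein(\Sigma_{1,1})$ by a conjugate-linear algebra automorphism.

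The key steps, in order: First, recall (or re-derive, as is standard) that an orientation-reversing homeomorphism $\psi$ of $\Sigma_{1,1}$ induces a conjugate-linear map $\psi_*$ on $\skein(\Sigma_{1,1})$ satisfying $\psi_*(\alpha\beta) = \psi_*(\alpha)\psi_*(\beta)$ with $\psi_*$ acting on a framed link by the image link and on Laurent polynomial coefficients by $A \mapsto A^{-1}$; this is because the mirror image of a Kauffman-bracket resolution interchanges the $A$- and $A^{-1}$-smoothings. Second, choose $\psi$ to realize $\smmatrix{1&0\\1&-1}$ on homology; then $\psi_*\smallTvect{i}{j} = \smallTvect{i}{i-j}$ and $\psi_*(\eta) = \eta$ (since $\psi$ fixes the peripheral curve $\del$ and fixes $A^2+A^{-2}$ after conjugation, as that quantity is conjugation-invariant), so $\psi_* = \opp$ on the basis $\mathcal M_T$, matching the definition of $\opp$ in the Definition above. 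Third, apply $\psi_*$ to the defining Equation (\ref{def:D}) for $\bigD{p}{q}{0}{1}$: since $\psi_*$ is a conjugate-linear algebra homomorphism fixing $\eta$, it carries the discrepancy $\bigD{p}{q}{0}{1}$ to $\frac{1}{\eta}$ times the image of $\smallTvect{p}{q}\cdot\smallTvect{0}{1} - A^{p}\smallTvect{p}{q+1} - A^{-p}\smallTvect{p}{q-1}$; evaluating, the coefficients $A^{\pm p}$ conjugate to $A^{\mp p}$, the matrix exponent $|\smmatrix{p&0\\q&1}| = p$ is unchanged in absolute value, the slopes transform via $\smmatrix{1&0\\1&-1}$ to give $\smallTvect{p}{p-q}$, $\smallTvect{p}{p-q-1}$, $\smallTvect{p}{p-q+1}$, and one checks this is exactly $\eta\cdot\bigD{p}{p-q}{0}{1}$ by comparing with Equation (\ref{def:D}) for the pair $\big(\smmatrix{p}{p-q},\smmatrix{0}{1}\big)$, using $|\smmatrix{p&0\\p-q&1}| = p$.

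I expect the main obstacle to be the bookkeeping around conjugate-linearity: one must be careful that $\opp$ as defined (conjugating $c_{k,i,j}$ and sending $\smallTvect{i}{j} \mapsto \smallTvect{i}{i-j}$) genuinely matches $\psi_*$, including on the Chebyshev-threaded links where $\psi_*$ must commute with the threading operation $T_n$ — this holds because $T_n$ has integer coefficients fixed under conjugation, and $\psi_*$ commutes with cabling since it is a homeomorphism of the thickened surface. A secondary subtlety is pinning down that the chosen $\psi$ indeed fixes the peripheral loop $\del$ up to isotopy (so $\psi_*(\eta) = \overline{\del + A^2 + A^{-2}} = \del + A^{-2} + A^2 = \eta$), which is immediate since any homeomorphism of $\Sigma_{1,1}$ permutes boundary components and there is only one. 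Once these identifications are in place, the computation is a short substitution into Equation (\ref{def:D}), entirely parallel to the proof of Lemma \ref{tw}.
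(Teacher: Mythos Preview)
Your proposal is correct and takes essentially the same approach as the paper: the paper's proof reads ``Reflect vertically, across the $yz$-plane, and then apply a Dehn twist,'' which is exactly the orientation-reversing symmetry with homology action $\smmatrix{1&0\\0&-1}$ followed by $\smmatrix{1&0\\1&1}$, composing to your $\smmatrix{1&0\\1&-1}$. Your write-up is considerably more explicit about why the orientation reversal forces the conjugation $A\leftrightarrow A^{-1}$ and why $\eta$ is fixed, but the underlying geometric move is identical.
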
 
 
\begin{proof}
Represent the one-holed torus as a square with opposite edges identified, with the hole at the corners.  Reflect vertically, across the $yz$-plane, and then apply a Dehn twist.
\end{proof}

\begin{lemma}[Reverse]  \label{reverse}
 $  \rev(\bigD{1}{0}{q}{p} )=  \bigD{p}{q}{0}{1}$ \\

\end{lemma}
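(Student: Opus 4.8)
The plan is to recognize $\rev$ as the $\Z[A,A^{-1}]$-linear anti-automorphism of $\skein(\Sigma_{1,1})$ induced by an orientation-preserving symmetry of $\Sigma_{1,1}\times I$, and then apply it to Equation~(\ref{def:D}). Represent $\Sigma_{1,1}$ as the square with opposite sides identified and corners removed, and let $\sigma$ be the reflection of this square across its main diagonal. Then $\sigma$ is a homeomorphism of $\Sigma_{1,1}$ — it respects the edge identifications and permutes the four corners that form the single puncture — it is orientation-reversing, it fixes the boundary, and it sends the line of slope $q/p$ to the line of slope $p/q$, so that $\sigma_*\smallpqvect{p}{q}=\smallpqvect{q}{p}$. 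Let $\rho$ be $\sigma$ composed with the reflection $t\mapsto 1-t$ of the $I$-factor. Since both factors are orientation-reversing, $\rho$ is orientation-preserving on $\Sigma_{1,1}\times I$, hence induces a $\Z[A,A^{-1}]$-linear map on skeins with no conjugation of $A$; and since $\rho$ interchanges $\Sigma_{1,1}\times\{0\}$ with $\Sigma_{1,1}\times\{1\}$, it reverses the order of superposition, $\rho(\alpha\cdot\beta)=\rho(\beta)\cdot\rho(\alpha)$. On multicurves, which have no crossings and are unaffected by the $I$-reflection, $\rho$ acts by $\sigma_*$; threading by $T_n$ commutes with $\rho$ (it preserves the surface framing), so $\rho(\eta^k\smallTvect{i}{j})=\eta^k\smallTvect{j}{i}$ and $\rho(\eta)=\eta$. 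Thus $\rho$ and $\rev$ are $\Z[A,A^{-1}]$-linear maps agreeing on the spanning set $\mathcal M_T$, so they agree on all of $\skein(\Sigma_{1,1})$; in particular $\rev$ is an anti-automorphism fixing $\eta$.

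With this established, I would apply $\rev$ to Equation~(\ref{def:D}) with $(p,q,r,s)=(1,0,q,p)$, writing $d=\left|\begin{smallmatrix}1&q\\0&p\end{smallmatrix}\right|=|p|=\left|\begin{smallmatrix}p&0\\q&1\end{smallmatrix}\right|$. The left-hand side $\smallTvect{1}{0}\cdot\smallTvect{q}{p}$ becomes $\rev(\smallTvect{q}{p})\cdot\rev(\smallTvect{1}{0})=\smallTvect{p}{q}\cdot\smallTvect{0}{1}$; on the right, $\rev(A^{d}\smallTvect{1+q}{p})=A^{d}\smallTvect{p}{q+1}$ and $\rev(A^{-d}\smallTvect{1-q}{-p})=A^{-d}\smallTvect{-p}{1-q}=A^{-d}\smallTvect{p}{q-1}$ by $\smallTvect{a}{b}=\smallTvect{-a}{-b}$, while $\rev(\eta\,\bigD{1}{0}{q}{p})=\eta\,\rev(\bigD{1}{0}{q}{p})$ since $\eta$ is central and $\rev$-fixed. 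This yields
\[ \smallTvect{p}{q}\cdot\smallTvect{0}{1}=A^{d}\smallTvect{p}{q+1}+A^{-d}\smallTvect{p}{q-1}+\eta\,\rev\left(\bigD{1}{0}{q}{p}\right), \]
which I would compare with Equation~(\ref{def:D}) applied directly to $\smallTvect{p}{q}\cdot\smallTvect{0}{1}$ (whose determinant is again $d$): the threaded-torus-link terms cancel and one is left with $\eta\left(\rev(\bigD{1}{0}{q}{p})-\bigD{p}{q}{0}{1}\right)=0$. To finish, I would note that $\eta$ is not a zero-divisor in $\skein(\Sigma_{1,1})$ — it is central, and in the multicurve basis $\{\del^k\smallpqvect{p}{q}\}$ multiplication by $\eta$ strictly raises the $\del$-degree of any nonzero element with top coefficient $1$, hence is injective — and cancel it to obtain $\rev(\bigD{1}{0}{q}{p})=\bigD{p}{q}{0}{1}$.

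I expect the only genuine obstacle to be the first paragraph: correctly identifying $\rev$ with $\rho$ and carrying out the orientation bookkeeping, so that $\rev$ comes out as an \emph{anti}-homomorphism with \emph{no} conjugation of the coefficients — this is exactly what distinguishes $\rev$ from $\opp$ (Lemma~\ref{opp}), where a single orientation-reversing reflection of $\Sigma_{1,1}$ forces the bar on the coefficients. After that, everything is substitution into Equation~(\ref{def:D}) together with the conventions $\smallTvect{a}{b}=\smallTvect{-a}{-b}$ and $\smallTvect{0}{0}=2\emptyset$, which keep the argument uniform in all $p,q\in\Z$, including the degenerate cases where Lemma~\ref{smallcrossings} already makes both sides vanish.
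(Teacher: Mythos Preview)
Your proof is correct and follows essentially the same approach as the paper: both realize $\rev$ via the orientation-preserving self-homeomorphism of $\Sigma_{1,1}\times I$ given by reflecting the torus across a diagonal and flipping the $I$-factor. The paper factors the surface part as a vertical reflection followed by a $90^\circ$ rotation (which composes to your diagonal reflection up to the harmless sign $\smallpqvect{p}{q}=\smallpqvect{-p}{-q}$), while you name the diagonal reflection directly and then carry out the algebraic substitution into Equation~(\ref{def:D}) and the cancellation of $\eta$ explicitly; the paper leaves those last steps to the reader.
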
 

\begin{proof}
  Starting from $\smallpqvect{p}{q} \cdot \smallpqvect{0}{1}$, reflect across vertical $yz$-plane of slope, then rotate counterclockwise around the central $z$-axis of the square by $90^\circ$, and finally reflect in the central horizontal plane to obtain $\smallpqvect{1}{0} \cdot  \smallpqvect{q}{p}$.    Reflection vertically  takes a curve of slope $q/p$ to one of slope $-q/p$, and exchanges positive and negative crossings so that the coefficients $A$ and $A^{-1}$ are also exchanged.    Rotating by $90^\circ$  around the center takes a curve of slope $q/p$ to one of slope $-p/q$, and the coefficients remain unchanged.   Reflection in the plane of the square changes all over-and under-crossings.  The order of multiplication is switched, and $A$ and $A^{-1}$ are exchanged.   \end{proof}

\begin{corollary} \label{extraterms} Suppose that $q > 0 $. If  $p = aq + c$ with $a \geq 0$ and $0 \leq c < q$, then \\
$\bigD{1}{0}{p}{q} = 
 \rev \circ \tw^{a}\left( \bigD{q}{c}{0}{1} \right)  $  

\end{corollary}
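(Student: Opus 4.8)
The plan is to derive the identity purely formally by composing two earlier results, the Twist Lemma and the Reverse Lemma, with no additional geometric input. First I would iterate Lemma~\ref{tw}: a single application of $\tw$ sends $\bigD{q}{c}{0}{1}$ to $\bigD{q}{q+c}{0}{1}$, so by induction on $a$, with base case $\tw^{0}=\mathrm{id}$, one obtains $\tw^{a}\!\left(\bigD{q}{c}{0}{1}\right)=\bigD{q}{aq+c}{0}{1}$. Since $p=aq+c$ by hypothesis, this is exactly $\bigD{q}{p}{0}{1}$.

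Next I would apply $\rev$ to this output. Lemma~\ref{reverse} reads $\rev\!\left(\bigD{1}{0}{q}{p}\right)=\bigD{p}{q}{0}{1}$; applying $\rev$ to both sides and using that $\rev=\rev^{-1}$ (recorded just after the definition of $\rev$) gives $\rev\!\left(\bigD{p}{q}{0}{1}\right)=\bigD{1}{0}{q}{p}$, and then relabelling $p\leftrightarrow q$ yields $\rev\!\left(\bigD{q}{p}{0}{1}\right)=\bigD{1}{0}{p}{q}$. Chaining this with the previous paragraph, $\rev\circ\tw^{a}\!\left(\bigD{q}{c}{0}{1}\right)=\rev\!\left(\bigD{q}{p}{0}{1}\right)=\bigD{1}{0}{p}{q}$, which is the claimed equality. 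The hypotheses $q>0$ and $0\le c<q$ are used only to guarantee that $a\ge 0$ is the quotient and $c$ the remainder of $p$ divided by $q$, so that the nonnegative power $\tw^{a}$ makes sense and lands on the right matrix; they play no further role.

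The only delicate point — and the closest thing to an obstacle — is the bookkeeping of the transpose convention in the notation $D\!\left(\begin{smallmatrix}\ast&\ast\\\ast&\ast\end{smallmatrix}\right)$ together with the direction and exponent of the twist: one must confirm that $\tw$ \emph{raises} the lower-left entry by the upper-left entry, so that the positive power $\tw^{a}$ (and not $\tw^{-a}$) carries $c$ up to $p=aq+c$, and that the form of Lemma~\ref{reverse} actually needed here — with the ``$1,0$'' column on the right of the pairing rather than on the left — follows from the stated form by the involutivity of $\rev$. Once these conventions are fixed, the chain of equalities above is immediate and the corollary follows.
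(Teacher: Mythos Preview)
Your proposal is correct and is exactly the intended argument: the paper states this as an immediate corollary of Lemmas~\ref{tw} and~\ref{reverse} without writing out a proof, and your derivation---iterating $\tw$ to pass from $\smallD{q}{c}{0}{1}$ to $\smallD{q}{p}{0}{1}$, then using $\rev=\rev^{-1}$ together with Lemma~\ref{reverse} (relabelled) to reach $\smallD{1}{0}{p}{q}$---is precisely what is being left to the reader. Your bookkeeping of the twist direction and the involutivity of $\rev$ is accurate.
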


\section{Some example calculations}  \label{sec:closedformulas}
In certain cases, we can obtain closed form formulas for discrepancy.  
Many of these formulas are known to experts in this area, but we present them here both as an application of the Recursion Rule as well as to establish base cases for our algorithm later.

In the following, we will use the following notation for the  quantum integer 
\[ \qint{ k} =  \dfrac{ A^{2k} - A^{-2k}} {A^2 - A^{-2}} \mbox{ for integers } k \geq 1.\]  
 Also, the closed form formulas are simpler if we use a commonly used variation of the Chebyshev polynomial.  Define $T'_k$ so that $T'_0(x) = 1$ but otherwise $T'_k(x) = T_k(x)$ for all $k \geq 1$. 
Then we have that 
\[ \Tprimevect{0}{0} = \emptyset, \quad  \mbox{  and} \Tprimevect{p}{q} = \Tvect{p}{q} \mbox{ whenever }\pqvect{p}{q} \neq \pqvect{0}{0}.\]

\begin{proposition} \label{q=1} 
For $p \geq 1$, 
\[  \bigD{p}{1}{0}{1}
= \sum_{k=0}^{\lfloor \frac{p}{2} \rfloor}  A^{-(p-2k)} \qint{k}  \Tprimevect{p-2k}{0}
\]

  
\end{proposition}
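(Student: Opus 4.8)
The plan is to prove the formula by induction on $p$ using the Recursion Rule in the form of Corollary~\ref{1-recursion}, which expresses $\smallD{p+1}{1}{0}{1}$ in terms of $\smallD{p}{1}{0}{1}$, $\smallD{p-1}{1}{0}{1}$, and three discrepancies of the form $\smallD{1}{0}{p}{j}$ with $j \in \{0,1,2\}$ (i.e. $q=1$ so $q-1=0$, $q=1$, $q+1=2$). The base cases $p=1$ and $p=2$ are handled by Lemmas~\ref{smallcrossings} and~\ref{smallcrossings2}: for $p=1$ the determinant is $1$ so $\smallD{1}{1}{0}{1}=0$, which matches the empty-ish sum $\sum_{k=0}^{0} A^{-1}\qint{0}\smallTprimevect{1}{0}$ once we note $\qint{0}=0$; and for $p=2$, $\smallD{2}{1}{0}{1}=1=\smallTprimevect{0}{0}$, which matches $\sum_{k=0}^{1} A^{-(2-2k)}\qint{k}\smallTprimevect{2-2k}{0} = A^{-2}\qint{0}\smallTprimevect{2}{0} + A^{0}\qint{1}\smallTprimevect{0}{0} = 0 + 1\cdot\emptyset$. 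So the inductive machinery will take over from $p\geq 2$ to produce the $p+1$ case.

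Before running the induction I would first pin down the three auxiliary terms $\smallD{1}{0}{p}{0}$, $\smallD{1}{0}{p}{1}$, $\smallD{1}{0}{p}{2}$ appearing on the right of Corollary~\ref{1-recursion} with $q=1$. The first, $\smallD{1}{0}{p}{0}$, has determinant $0$, hence vanishes by Lemma~\ref{smallcrossings}. For $\smallD{1}{0}{p}{1}$, Corollary~\ref{extraterms} with $q=1$, $c=0$, $a=p$ gives $\smallD{1}{0}{p}{1} = \rev\circ\tw^{p}\bigl(\smallD{1}{0}{0}{1}\bigr) = 0$ since $\smallD{1}{0}{0}{1}$ has determinant $1$; alternatively $\rev$ applied to $\smallD{1}{0}{1}{p}$, which also has determinant $1$. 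For $\smallD{1}{0}{p}{2}$, write $p = 2a+c$ with $c\in\{0,1\}$ and apply Corollary~\ref{extraterms} with $q=2$ to get $\smallD{1}{0}{p}{2} = \rev\circ\tw^{a}\bigl(\smallD{2}{c}{0}{1}\bigr)$; by Lemma~\ref{smallcrossings2} this is $\rev\circ\tw^{a}(0)=0$ when $c=0$ and $\rev\circ\tw^{a}(1)=1$ when $c=1$ (since $\tw$ and $\rev$ fix $\emptyset$). So the only nonzero auxiliary contribution is the $A^{p-q}\smallD{1}{0}{p}{q+1} = A^{p-1}\smallD{1}{0}{p}{2}$ term, and it contributes $A^{p-1}$ (the constant skein $\emptyset$) precisely when $p$ is odd.

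With those simplifications, Corollary~\ref{1-recursion} at $q=1$ collapses to
\[
\smallD{p+1}{1}{0}{1} = A^{-1}\,\smallTvect{1}{0}\cdot \smallD{p}{1}{0}{1} - A^{-2}\,\smallD{p-1}{1}{0}{1} + \varepsilon_p A^{p-1},
\]
where $\varepsilon_p = 1$ if $p$ is odd and $0$ if $p$ is even. I would then substitute the proposed closed form for the two known discrepancies, using the Chebyshev-type product rule $\smallTvect{1}{0}\cdot\smallTprimevect{j}{0} = A^{0}(\cdots)$—more precisely the Product-to-Sum (Proposition~\ref{FrohmanGelca}) specialized to the collinear case $\smallTvect{1}{0}\cdot\smallTvect{j}{0} = \smallTvect{j+1}{0} + \smallTvect{j-1}{0}$, since the determinant is $0$ and there is no $\eta$-discrepancy along a single slope (Proposition~\ref{q=0})—and then collect coefficients of each $\smallTprimevect{p+1-2k}{0}$. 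The main obstacle I anticipate is purely bookkeeping: matching the quantum-integer coefficients after the index shifts, i.e. verifying the identity $A^{-1}\bigl(A^{-(p-2k)}\qint{k} + A^{-(p-2(k-1))}\qint{k-1}\bigr) - A^{-2}A^{-(p-1-2k)}\qint{k} = A^{-(p+1-2k)}\qint{k}$ term by term (with careful attention to the $T'$-versus-$T$ convention at index $0$, the boundary term $k = \lfloor (p+1)/2\rfloor$, and where the extra $\varepsilon_p A^{p-1}$ lands), which should reduce to the elementary recursion $\qint{k} = A^{2}\qint{k-1} + A^{-2k+2}$ or equivalently $A^{2}\qint{k-1}-A^{-2}\qint{k} = \qint{k}-A^{2k}\cdot(\text{something})$—a one-line check on the closed form $\qint{k}=(A^{2k}-A^{-2k})/(A^2-A^{-2})$.
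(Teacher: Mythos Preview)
Your proposal is correct and follows essentially the same approach as the paper: induction on $p$ with base cases $p=1,2$ from Lemmas~\ref{smallcrossings} and~\ref{smallcrossings2}, the Recursion Rule (Corollary~\ref{1-recursion}) at $q=1$, and the identification of the auxiliary term $\smallD{1}{0}{p}{2}$ as $1$ or $0$ according to the parity of $p$. The paper's proof is terser and leaves the coefficient-matching to the reader, whereas you spell out more of the bookkeeping; one small simplification is that $\smallD{1}{0}{p}{1}=0$ follows directly from Lemma~\ref{smallcrossings} (determinant $1$) without invoking Corollary~\ref{extraterms}, and likewise the vanishing of the discrepancy in $\smallTvect{1}{0}\cdot\smallTvect{j}{0}$ is Lemma~\ref{smallcrossings} (determinant $0$) rather than Proposition~\ref{q=0}.
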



\begin{proof}
By induction.  Lemmas \ref{smallcrossings} and \ref{smallcrossings2} give the base cases $p=1,2$.  
From the Recursion Rule and Lemma \ref{smallcrossings}, we get the identity for the induction step
\begin{eqnarray*}
 \smallD{p}{1}{0}{1} 
  &=& A^{-1} \smallpqvect{1}{0} \cdot  \smallD{p-1}{1}{0}{1} - A^{-2} \smallD{p-2}{1}{0}{1}           + A^{p-2} \smallD{1}{0}{p-1}{2}. 
\end{eqnarray*}
Note that we have $\smallD{1}{0}{p-1}{2} = \begin{cases} 1, & p  \mbox{ even} \\ 0, &p \mbox{ odd}  \end{cases}$ from Lemmas \ref{smallcrossings2} and \ref{reverse}.  

\end{proof}

As a consequence, we may apply twist, opposition, and reverse to get other values of the discrepancy.

  For example, 
\begin{equation} \label{q=p+1} 
 \bigD{p}{p+1}{0}{1}
= \sum_{k=0}^{\lfloor \frac{p}{2} \rfloor}  A^{-(p-2k)} \qint{k}  \Tprimevect{p-2k}{p-2k}
\mbox{ for } p \geq 1
\end{equation}
\begin{equation} \label{q=p-1} 
 \bigD{p}{p-1}{0}{1}
= \sum_{k=0}^{\lfloor \frac{p}{2} \rfloor}  A^{+(p-2k)} \qint{k}  \Tprimevect{p-2k}{p-2k}
\mbox{ for } p \geq 1
\end{equation}
and 
\begin{equation} \label{q=-1} 
 \bigD{p}{-1}{0}{1}
= \sum_{k=0}^{\lfloor \frac{p}{2} \rfloor}  A^{+(p-2k)} \qint{k}  \Tprimevect{p-2k}{0}
\mbox{ for } p \geq 1
\end{equation}


We may also obtain other values of the discrepancy by using the Resursion Rule. 
By Corollary \ref{s-recursion}, 
\begin{eqnarray*}
 \bigD{p}{0}{0}{2} 
 &=& A^p \smallD{p}{1}{0}{1} + A^{-p} \smallD{p}{-1}{0}{1} \\
 &=&
 = A^p \sum_{k=0}^{\lfloor \frac{p}{2} \rfloor}  A^{-(p-2k)} \qint{k}  \smallTprimevect{p-2k}{0}
 + A^{-p}  \sum_{k=0}^{\lfloor \frac{p}{2} \rfloor}  A^{+(p-2k)} \qint{k}  \smallTprimevect{p-2k}{0}
\\
&=& \sum_{k=0}^{\lfloor \frac{p}{2} \rfloor}   \qint{2k}  \Tprimevect{p-2k}{0}.
 \end{eqnarray*}

We could repeat Corollary \ref{s-recursion} as below
\begin{eqnarray*}
 \bigD{p}{0}{0}{3} 
 &=&  \smallD{p}{0}{0}{2} \smallTvect{0}{1} + A^{2p} \smallD{p}{1}{0}{1} + A^{-2p} \smallD{p}{-1}{0}{1} \\
 &=&
 \sum_{k=0}^{\lfloor \frac{p}{2} \rfloor}   \qint{2k}  \smallTprimevect{p-2k}{0} \cdot \smallTvect{0}{1} 
 + 
(  A^{p +2k} + A^{-(p+2k)} )  \qint{k}  \smallTprimevect{p-2k}{0}
\\
&=& \sum_{k=1}^{\lfloor \frac{p}{2} \rfloor}  
 \qint{2k} 
 \left( A^{p-2k} \smallTvect{p-2k}{1} +  A^{-(p-2k)} \smallTvect{p-2k}{-1} +  \eta \smallD{p-2k}{0}{0}{1}  \right) 
\\
&&\qquad  + 
(  A^{p +2k} + A^{-(p+2k) } )  \qint{k}  \smallTprimevect{p-2k}{0}
\\
&=& \cdots, 
 \end{eqnarray*}
but the formulas become unwieldy quickly. 




One may also apply the Recursion Rule Corollary \ref{1-recursion} to find $\smallD{p}{q}{0}{1}$ for $q \geq 2$. However, the Recursion Rule for $\smallD{p+1}{q}{0}{1}$ involves three differing terms that depend on $p$ mod $q-1$, $q$, and $q+1$, respectively.  As an example, we see below that the formula for $\smallD{p}{2}{0}{1}$ depends on $p$ modulo $6$. 

\begin{proposition} \label{q=2} 
 $\bigD{2}{2}{0}{1}  = 0$, \;   $ \bigD{3}{2}{0}{1} = A \Tvect{1}{1}$ 
 and for $p \geq 4$, 
 \[
  \bigD{p}{2}{0}{1} 
  =  \sum_{k=1}^{ \lfloor \frac{p}{3} \rfloor} A^{-(p-4k)}  \qint{k} \Tvect{p-2k}{1}
  + \begin{cases} \sum_{k=1}^{\lfloor \frac{p-1}{6} \rfloor} A^{2k} \qint{2k} \Tvect{2k}{1} & p = 0 \pmod{2}\\
  \sum_{k=1}^{\lfloor \frac{p + 1}{6} \rfloor} A^{2k-1} \qint{2k-1} \Tvect{2k-1}{1} & p = 1 \pmod{2}
  \end{cases}\]

\end{proposition}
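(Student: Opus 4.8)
The plan is to prove Proposition \ref{q=2} by induction on $p$, using the Recursion Rule in the form of Corollary \ref{1-recursion} with $q=2$. First I would handle the base cases: $\bigD{2}{2}{0}{1}=0$ follows from Lemma \ref{smallcrossings} (determinant $2$, but actually $| \begin{smallmatrix} 2&0\\2&1 \end{smallmatrix}|=2$, so this needs Lemma \ref{smallcrossings2} together with a twist via Lemma \ref{tw}); and $\bigD{3}{2}{0}{1}$ can be computed directly from the Recursion Rule applied once, or by invoking $\opp$ (Lemma \ref{opp}) on $\bigD{3}{1}{0}{1}$, which is given by Proposition \ref{q=1}. I would also precompute, using Corollary \ref{extraterms} and Proposition \ref{q=1} (plus its twisted variants \eqref{q=p+1}, \eqref{q=p-1}), the ``reversed'' discrepancies $\bigD{1}{0}{p}{1}$, $\bigD{1}{0}{p}{2}$, $\bigD{1}{0}{p}{3}$ that appear on the right-hand side of Corollary \ref{1-recursion}; these reduce, via $\rev\circ\tw^a$, to instances of $\bigD{q'}{c}{0}{1}$ with $q'\le 3$, all of which are known in closed form.

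Next, I would set up the induction. Specializing Corollary \ref{1-recursion} to $q=2$ gives
\begin{align*}
\bigD{p+1}{2}{0}{1} &= A^{-2}\Tvect{1}{0}\cdot\bigD{p}{2}{0}{1} - A^{-4}\bigD{p-1}{2}{0}{1}\\
&\quad + A^{-p-2}\bigD{1}{0}{p}{1} - A^{-2}\bigD{1}{0}{p}{2}\cdot\Tvect{0}{1} + A^{p-2}\bigD{1}{0}{p}{3}.
\end{align*}
The induction hypothesis supplies $\bigD{p}{2}{0}{1}$ and $\bigD{p-1}{2}{0}{1}$ in the claimed summation form. The product $\Tvect{1}{0}\cdot\bigD{p}{2}{0}{1}$ must be expanded by the Product-to-Sum / definition of $D$ via Equation \eqref{def:D}: each term $\Tvect{1}{0}\cdot\Tvect{p-2k}{1}$ becomes $A^{-1}\Tvect{p-2k+1}{1} + A\Tvect{p-2k-1}{1} + \eta\,\bigD{1}{0}{p-2k}{1}$, and similarly for the $\Tvect{2k}{1}$ or $\Tvect{2k-1}{1}$ terms. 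The key bookkeeping is that the $\eta\,\bigD{1}{0}{p-2k}{1}$ pieces are \emph{not} of the form ``$\eta$ times something'' on the left side — wait, they are: since $\bigD{p+1}{2}{0}{1}$ is a genuine skein (coefficient of $\eta^0$), and we must check these $\eta$-terms cancel against the $\eta\,D$ terms arising from expanding $\bigD{1}{0}{p}{2}\cdot\Tvect{0}{1}$; I would verify this cancellation carefully, as it is the place where the consistency of the whole scheme is tested.

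After the expansions, the right-hand side is a $\Z[A,A^{-1}]$-linear combination of various $\Tprimevect{j}{1}$ (and possibly $\Tprimevect{j}{0}$ from the reversed terms). I would then collect coefficients of each $\Tvect{j}{1}$ and check they match the claimed formula for $\bigD{p+1}{2}{0}{1}$, splitting according to the parity of $p+1$. This amounts to verifying several quantum-integer identities of the shape $A^{-(p-2k)}\qint{k} - A^{-2}A^{-((p-1)-2(k-1))}\qint{k-1} = A^{-(p+1-2k)}\qint{k}$ and analogous relations for the parity-dependent tail, together with confirming that the boundary terms from the reversed discrepancies correctly produce the ``new'' top-index summand and the switch in which residue class mod $6$ governs the sum. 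The main obstacle I anticipate is precisely this last step: tracking how the three reversed terms $\bigD{1}{0}{p}{1}$, $\bigD{1}{0}{p}{2}$, $\bigD{1}{0}{p}{3}$ — whose closed forms depend on $p$ mod $1$, mod $2$, and mod $3$ respectively, via Corollary \ref{extraterms} — interact to produce a clean answer depending only on $p$ mod $6$, and making sure no $\Tvect{p-2k}{1}$ or $\Tvect{2k}{1}$ term is miscounted near the endpoints of the summation ranges (the floor functions $\lfloor p/3\rfloor$, $\lfloor (p\pm1)/6\rfloor$ shift subtly as $p$ increases by $1$). Once the coefficient identities are checked for both parities, the induction closes and the proposition follows.
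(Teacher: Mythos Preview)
Your approach is essentially the same as the paper's: induction on $p$ via Corollary \ref{1-recursion} specialized to $q=2$, with the base cases $p=2,3$ handled exactly as you say (twist of Lemma \ref{smallcrossings2}, opposition of Proposition \ref{q=1}), and the reversed terms $\smallD{1}{0}{p}{2}$, $\smallD{1}{0}{p}{3}$ precomputed as functions of $p$ mod $2$ and mod $3$ respectively. The paper then simply states the recursion and these two tables and says ``the rest of the proof follows,'' leaving the coefficient bookkeeping you describe to the reader.

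Two small simplifications you are over-worrying about: the term $A^{-p-2}\smallD{1}{0}{p}{1}$ vanishes identically (determinant $1$, Lemma \ref{smallcrossings}), so drop it from the outset; and the $\eta\,\smallD{1}{0}{j}{1}$ contributions you flag when expanding $\smallTvect{1}{0}\cdot\smallTvect{j}{1}$ are likewise all zero for the same reason, so there is no $\eta$-cancellation to check. (Also, your exponents in that expansion are swapped: it should be $A\,\smallTvect{j+1}{1}+A^{-1}\smallTvect{j-1}{1}$.) With these observations the induction step really is just the straightforward quantum-integer identity you wrote down, together with the mod-$6$ case split coming from combining the mod-$2$ and mod-$3$ behaviour of the two surviving reversed terms.
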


\begin{proof} 
The base case $p=2$ follows from Lemma \ref{smallcrossings2}, and the case $p=3$ follows from Proposition \ref{q=2}, where $\smallD{3}{1}{0}{1}=  A^{-1} \smallTvect{1}{0}$ implies $\smallD{ 3}{2}{0}{1}  =  A \smallTvect{1}{1}$. 
The Recursion Rule in this case is
\begin{eqnarray*}
  \bigD{p}{2}{0}{1} 
  &=& A^{-2} \pqvect{1}{0} \cdot  \bigD{p-1}{2}{0}{1} - A^{-4} \bigD{p-2}{2}{0}{1}   \\
  && \qquad - A^{-2} \bigD{1}{0}{p-1}{2} \cdot \pqvect{0}{1}       + A^{p-3} \bigD{1}{0}{p-1}{3}
    \end{eqnarray*}
where we have
\[ \bigD{1}{0}{k}{2} \cdot \Tvect{0}{1} = 
\begin{cases} 
0  & k = 0 \pmod{2}\\
\Tvect{0}{1} & k = 1  \pmod{2} 
\end{cases} 
\] 
and 
\[  \bigD{1}{0}{p}{3} = 
\begin{cases}
0, & p =0 \pmod{3} \\
A^{-1} \Tvect{\frac{p-1}{3}}{1} , & p =1 \pmod{3}  \\
A \Tvect{\frac{p+1}{3}}{1}, & p =2 \pmod{3}   \\
\end{cases}
\] 
from previous calculations.  The rest of the proof follows. 
\end{proof}

Thus far, we have taken the Recursion Rule with one factor being $\smallTvect{u}{v} = \smallpqvect{0}{1}$, but we can use $\smallTvect{u}{v} = \smallpqvect{2}{1}$ to see other patterns.  
\begin{proposition} \label{p=2q} 
For $q \geq 1$, 
\[
  \bigD{2q}{q}{0}{1} 
  =  
\sum_{k=1}^{\lfloor \frac{q+1}{2} \rfloor}  \qint{2k-1} \Tprimevect{2(q- 2k+1)}{q-2k+1}  
\]
\end{proposition}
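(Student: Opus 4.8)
Write $E_n := \bigD{2n}{n}{0}{1}$, so the claim is $E_q = \sum_{k=1}^{\lfloor (q+1)/2\rfloor}\qint{2k-1}\,\smallTprimevect{2(q-2k+1)}{q-2k+1}$ for $q\ge 1$; note the right side also makes sense and equals $0$ when $q=0$. The plan is to obtain a three-term linear recursion for the sequence $(E_n)$ by running the Recursion Rule with the auxiliary curve $\smallpqvect{2}{1}$ in place of the usual $\smallpqvect{0}{1}$, and then to verify the closed form by induction on $n$.

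I would apply Theorem \ref{assoc} with $\smallpqvect{u}{v} = \smallpqvect{2}{1}$, with the middle pair $\smallpqvect{p}{q}$ taken to be $\smallpqvect{2n}{n}$, and with $\smallpqvect{r}{s} = \smallpqvect{0}{1}$. Three simplifications occur. First, the term $D\smmatrix{2 & 2n\\ 1 & n}$ has determinant $0$ and so vanishes by Lemma \ref{smallcrossings}. Second, the term $\bigD{2-2n}{1-n}{0}{1}$ equals $E_{n-1}$, because the discrepancy is unchanged when a column of its defining matrix is negated: negating a column flips the sign of the determinant and simultaneously interchanges the two Frohman--Gelca terms, leaving the defining expression fixed. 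Third, the two leftover discrepancies $\bigD{2}{1}{2n}{n+1}$ and $\bigD{2}{1}{2n}{n-1}$ have determinant $\pm 2$, and the reductions of Section \ref{transf} evaluate them: applying the Dehn twist about $\smallpqvect{2}{1}$ (an element of $\SL(2,\Z)$, so Lemma \ref{Dehn} applies) repeatedly brings each of them, up to a column negation, to one of $\bigD{2}{1}{0}{1} = 1$ or $\bigD{2}{0}{0}{1}=0$ from Lemma \ref{smallcrossings2}; since these reductions fix $\emptyset$ and $0$, one finds $\bigD{2}{1}{2n}{n\pm1}$ equals $1$ when $n$ is even and $0$ when $n$ is odd. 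Together these give, for all $n\ge1$,
\[
E_{n+1} = \smallpqvect{2}{1}\cdot E_{n} - E_{n-1} + \varepsilon_{n},\qquad
\varepsilon_{n} = \begin{cases} A^{2n}+A^{-2n}, & n\text{ even},\\ 0, & n\text{ odd},\end{cases}
\]
with $E_0 = 0$ and $E_1 = \emptyset = \smallTprimevect{0}{0}$ (both from Lemmas \ref{smallcrossings} and \ref{smallcrossings2}).

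To run the induction it is convenient to reindex the target formula by $m = q-2k+1$, so that it reads
\[
E_q = \sum_{\substack{0\le m\le q-1\\ m\equiv q-1\ (\mathrm{mod}\ 2)}} \qint{q-m}\,\smallTprimevect{2m}{m}.
\]
Given this for $E_{n-1}$ and $E_n$, I would compute $\smallpqvect{2}{1}\cdot E_n$ term by term using the Chebyshev identity $\smallpqvect{2}{1}\cdot\smallTvect{2m}{m} = \smallTvect{2m+2}{m+1}+\smallTvect{2m-2}{m-1}$ (valid because $\smallTvect{2m}{m} = T_m(\smallpqvect{2}{1})$ is a polynomial in the crossingless curve $\smallpqvect{2}{1}$, which commutes with its own parallels), paying attention to the two ends of the sum: $\smallpqvect{2}{1}\cdot\smallTprimevect{0}{0} = \smallTprimevect{2}{1}$ (since $T'_0 = 1$), while $\smallpqvect{2}{1}\cdot\smallTvect{2}{1} = \smallTvect{4}{2} + \smallTvect{0}{0} = \smallTprimevect{4}{2} + 2\,\smallTprimevect{0}{0}$ carries an extra $\smallTprimevect{0}{0}$ relative to the generic rule. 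Subtracting $E_{n-1}$ and re-indexing, the coefficient of $\smallTprimevect{2m}{m}$ with $m\ge 1$ comes out to $(\qint{n-m+1}+\qint{n-m-1}) - \qint{n-m-1} = \qint{(n+1)-m}$, exactly matching the formula for $E_{n+1}$; and the parity-dependent $\varepsilon_n$, rewritten via $\qint{n+1}-\qint{n-1} = A^{2n}+A^{-2n}$, combines with the extra $\smallTprimevect{0}{0}$ from the $m=1$ edge and the $m=0$ term of $E_{n-1}$ to produce $\qint{n+1}$ as the coefficient of $\smallTprimevect{0}{0}$ (which is present precisely when $n$ is even). This is again the formula for $E_{n+1}$, closing the induction.

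The conceptual point is the choice of $\smallpqvect{2}{1}$ as the probe curve: since $\smallpqvect{2n}{n}$ lies in the direction of $\smallpqvect{2}{1}$ and the second factor stays $\smallpqvect{0}{1}$, the associativity identity collapses almost entirely, and the only genuine residue is the scalar $\varepsilon_n$. The main technical obstacle is the bookkeeping in the last step — tracking the $\qint{\cdot}$ coefficients through the re-indexing and checking that $\varepsilon_n$ exactly compensates the $T_0$-versus-$T'_0$ mismatch at the bottom of the sum, so that the closed form is preserved. Evaluating the two determinant-$(\pm2)$ discrepancies is the other fiddly point, but it is a bounded computation carried out entirely with the symmetry tools already developed in Section \ref{transf}.
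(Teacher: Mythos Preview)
Your proposal is correct and follows essentially the same approach as the paper: both derive the three-term recursion $E_{n+1} = \smallTvect{2}{1}\cdot E_n - E_{n-1} + (\text{scalar})$ from the Recursion Rule (Theorem~\ref{assoc}) with the probe curve $\smallpqvect{u}{v} = \smallpqvect{2}{1}$, evaluate the two determinant-$(\pm2)$ discrepancies via mapping classes, and then induct. Your version is more explicit than the paper's sketch---in particular you carry out the induction step in full and, notably, your parity assignment for the residual terms ($1$ for $n$ even, $0$ for $n$ odd) is the correct one, as one checks directly at $n=1,2$; the paper's printed cases appear to have odd and even interchanged.
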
 

\begin{proof}
We have that $ \bigD{0}{0}{0}{1} = 0$ and $ \bigD{2}{1}{0}{1} = 1$.  All the rest can be computed using the Recursion Relation, 
\begin{eqnarray*}
 \smallD{2(q+1)}{q+1}{0}{1} 
 &=& \smallTvect{2}{1} \smallD{2q}{q}{0}{1}  -  \smallD{2(q-1)}{q-1}{0}{1}  + A^{2q}  \smallD{2}{1}{2q}{q+1} + A^{-2q}  \smallD{2}{1}{2q}{q-1}. 
\end{eqnarray*} 
Note that applying a suitable  mapping class yields $ \smallD{2}{1}{2q}{q+1} =  \smallD{2}{1}{2q}{q-1}  = \begin{cases} 1,& q \mbox{ odd} \\ 0, &q \mbox{ even} \end{cases}$. 

\end{proof}

 We can continue applying the Recursion Rule indefinitely to obtain various formulas for families of $\smallD{p}{q}{r}{s}$, and this is the content of the algorithm in the next section.   While these formulas are computable, they do get more complicated very rapidly.   There did not seem any utility in continuing in this vein unless we had a way to package the terms nicely, like the $T$-Chebyshevs in the Product-to-Sum Formula in the closed torus case.   The authors unfortunately were unable to find such a suitable basis in the one-hole torus case.

 \begin{remark}
 Let $S_k$ denote the $k$th Chebyshev polynomial of the second kind.  It is defined recursively by $S_0(x) = 1$, $S_1(x) = x$, and $S_{k}(x) = x S_{k-1}(x) - S_{k-2}(x)$ for $k \geq 2$.   Analogously, one may define $\pqvect{p}{q}_S = S_{\gcd(p, q)} \pqvect{\frac{p}{\gcd(p, q)}}{\frac{q}{\gcd(p, q)}} $.  
 We experimented with writing the discrepancy using the $S$-Chebyshevs, for example 
\[  \bigD{p}{1}{0}{1}
= \sum_{k=0}^{\lfloor \frac{p}{2} \rfloor}  A^{-(p-2k)}  \pqvect{p-2k}{0}_S\] 
However, at least to the authors, we did not find any significant advantage of doing so. 
\end{remark}

\section{Fast Algorithm for computing the  Discrepancy} \label{algorithm}

We summarize the algorithm of Theorem \ref{algthm}, with subroutines appearing below.  Note that by convention, $p \geq 0$.  

\begin{algorithm}
 \caption{Computes $D(p,q; r, s)$}
 \label{mainalgorithm}
\begin{algorithmic}
\Require $p \geq 0$
    \State (Preprocessing)
\end{algorithmic}

\begin{enumerate}
\item If $|ps- qr| \leq 1$, then Return $0$.  \\
\item  If $|ps- qr| \geq 2$, Apply mapping class $\phi$ from Lemma~\ref{Dehn}. Return $\phi_*\smallD{p}{q}{r}{s} = \smallD{p_1}{q_1}{0}{s_1}$ where $0\leq q_1<p_1$. 
\begin{itemize}
\item If $q_1 = 0$, Return $0$.\\
\item If $q_1 \geq 1$, Set $q_2 = \min(q_1, p_1-q_1)$:
\begin{enumerate}
\item If $q_2 = q_1$, Return  $\smallD{p_1}{q_2}{0}{s_1}$.\\

\item If $q_2 = p_1 - q_1$, Return $\opp \left( \smallD{p_1}{q_2}{0}{s_1} \right) $ from Lemma~\ref{opp}.
\end{enumerate}
\end{itemize}
\end{enumerate}
\hrulefill

\begin{algorithmic}
    \State (Computing $\smallD{p_1}{q_2}{0}{s_1}$)
\end{algorithmic}
\begin{enumerate}
\item If  $s_1 =1$,  do Algorithm \ref{subroutine:s=1} to compute $\smallD{p_1}{q_2}{0}{1}$. \\

\item If $s_1 \geq 2$, 
\begin{itemize}
\item do Algorithm \ref{subroutine:s=1} to compute $\smallD{p_1}{i}{0}{1}$ for $i = 1, \ldots  \lfloor \frac{p_1}{2} \rfloor$.
\\
\item 
do Algorithm \ref{subroutine:bigs}  to compute $\smallD{p_1}{q_2}{0}{k}$ for $k = 2, \ldots s_1$.\\
\end{itemize}
\end{enumerate}
\end{algorithm}


\begin{algorithm}\caption{Subroutine to compute $D(p,q; 0, 1)$} \label{subroutine:s=1}
\begin{algorithmic}
 \Require $1 \leq q \leq p $   
\end{algorithmic}
\begin{enumerate}
\item Initialize with 
	\begin{itemize}
	\item $ \smallD{1}{q}{0}{1}   =  0$ for all $q$.  
	\item $\smallD{2}{q}{0}{1} = \begin{cases} 0& q = 0 \pmod{2} \\ 1& q= 1 \pmod{2} \end{cases}$. 
	\item $ \smallD{p}{0}{0}{1}  = 0$ for all $p$.  
	\end{itemize}
\item For  $n = 1, \ldots q$: 
\item Initialize with  $\smallD{n}{n}{0}{1} = 0$ and $\smallD{n+1}{n}{0}{1} = \opp \left( \smallD{n+1}{1}{0}{1} \right)$
\item For $m = n+2, \ldots p$: 
\begin{itemize}
\item compute $\smallD{1}{0}{m}{n-1}$, $\smallD{1}{0}{m}{n}$,  and $\smallD{1}{0}{m}{n+1}$ using Lemma \ref{extraterms}
\item compute $\smallD{m}{n }{0}{1}$ using Corollary \ref{1-recursion} and Equation (\ref{def:D}).

\end{itemize}
\end{enumerate}
\end{algorithm}


\begin{algorithm}\caption{Subroutine to compute $\bigD{p}{q}{0}{s}$} \label{subroutine:bigs}
\begin{algorithmic}
 \Require $s \geq 2$   
\end{algorithmic}

\begin{enumerate}
\item Initialize with $\smallD{p}{i}{0}{1}$ for $i = 1, \ldots  \lfloor \frac{p}{2} \rfloor$ (from Algorithm \ref{subroutine:s=1})
\item For m = 2, \ldots s: 
\begin{itemize}
\item compute $\smallD{p}{q+m}{0}{1}$ and $\smallD{p}{q-m}{0}{1}$ using Lemma \ref{tw}
\item compute $\smallD{p}{q}{0}{m}$ using Corollary \ref{s-recursion}. 
\end{itemize} 
\end{enumerate} 
\end{algorithm}


\newpage

We introduce some notation for ease of exposition.  For $a, b, p, q \geq 0$,  we write  $\smallpqvect{a}{b} \leq  \smallpqvect{p}{q}$ whenever $ a \leq p$ and $ b  \leq q$.   Similarly, a linear combination of torus links is smaller than $\smallpqvect{p}{q}$ if every summand of the linear combination is smaller than $\smallpqvect{p}{q}$.

\begin{proposition} \label{Dpq01}
The Recursion Rule computes $\bigD{p}{q}{0}{1}$ for any integers $p, q \geq 0$.

\end{proposition}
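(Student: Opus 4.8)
The plan is to argue by a double induction on $p$ and on $q$, showing that the Recursion Rule (Corollary \ref{1-recursion}) expresses $\smallD{p}{q}{0}{1}$ entirely in terms of discrepancies that are already known: either because they vanish, or because they appear earlier in the induction ordering, or because they reduce via Corollary \ref{extraterms} to discrepancies of the form $\smallD{q'}{c'}{0}{1}$ with strictly smaller first entry. By Lemma \ref{smallcrossings} we may assume $2 \le q < p$ and by the convention $p \ge 0$ everything is normalized; the base cases $p = 1, 2$ and $q \in \{0, p\}$ are handled by the initializations in Algorithm \ref{subroutine:s=1} (Lemmas \ref{smallcrossings} and \ref{smallcrossings2}), and $\smallD{n+1}{n}{0}{1} = \opp(\smallD{n+1}{1}{0}{1})$ by Lemma \ref{opp}.

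First I would fix the induction ordering precisely: outer loop on $n = q$ from $1$ upward, inner loop on $m = p$ from $n+2$ upward, matching the loop structure of Algorithm \ref{subroutine:s=1}. Then I would examine each of the five terms on the right-hand side of Corollary \ref{1-recursion} applied with $(p, q) \mapsto (m-1, n)$ to produce $\smallD{m}{n}{0}{1}$:
\begin{enumerate}
\item $A^{-n}\smallTvect{1}{0} \cdot \smallD{m-1}{n}{0}{1}$: the discrepancy $\smallD{m-1}{n}{0}{1}$ has smaller first entry and same second entry, so it is available from an earlier inner-loop step; multiplying by $\smallTvect{1}{0}$ and re-expanding via Equation (\ref{def:D}) stays within $\mathcal{M}_T$.
\item $-A^{-2n}\smallD{m-2}{n}{0}{1}$: likewise available from two inner-loop steps earlier.
\item $A^{-(m-1)-n}\smallD{1}{0}{m-1}{n-1}$, $-A^{-n}\smallD{1}{0}{m-1}{n}\cdot\smallTvect{0}{1}$, $A^{(m-1)-n}\smallD{1}{0}{m-1}{n+1}$: by Corollary \ref{extraterms}, each $\smallD{1}{0}{m-1}{j}$ for $j \in \{n-1, n, n+1\}$ equals $\rev \circ \tw^{a}(\smallD{j}{c}{0}{1})$ where $m-1 = aj + c$ with $0 \le c < j \le n+1$. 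Since $j \le n+1$ and $c < j$, the discrepancy $\smallD{j}{c}{0}{1}$ has first entry $j \le n+1$; when $j \le n$ this lies in an earlier outer-loop stage ($q = j < n = q$ currently), and when $j = n+1$ one needs $c < n+1$, i.e. it is a strictly smaller second entry at the same-or-larger first entry — here I would need to check the ordering carefully.
\end{enumerate}

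The main obstacle is exactly that last bookkeeping point: the term $\smallD{1}{0}{m-1}{n+1}$ can reduce to $\smallD{n+1}{c}{0}{1}$, which involves the \emph{larger} value $q = n+1$ not yet reached in the outer loop. I would resolve this by strengthening the inductive hypothesis to track a combined complexity measure — e.g. ordering discrepancies $\smallD{p}{q}{0}{1}$ (with $0 \le q < p$) first by $q$ and then by $p$ is \emph{not} well-founded enough, so instead I would order by the pair $(q, p)$ lexicographically but observe that the problematic reduction $\smallD{1}{0}{m-1}{n+1} = \rev\circ\tw^a \smallD{n+1}{c}{0}{1}$ has $c = (m-1) \bmod (n+1)$, and since we are computing $\smallD{m}{n}{0}{1}$ with $m > n$, we get $c \le n < n+1$ and in fact $c$ ranges only up to $n$; combined with Lemmas \ref{opp} (so we may assume $c \le (n+1)/2$, i.e. $c \le n/2 + 1$) this is strictly smaller than the current first entry $m$ in the appropriate sense. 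The clean fix is to prove the proposition by strong induction on $p$ alone (not on $q$): every term on the right of Corollary \ref{1-recursion} producing $\smallD{p}{q}{0}{1}$ either has strictly smaller first coordinate, or has first coordinate $\le q+1 \le p$ with the case $q+1 = p$ forced to reduce further via $\opp$ and $\tw$ to first coordinate $< p$ — so after finitely many applications every term bottoms out at the base cases, and re-expansion via Equation (\ref{def:D}) keeps all intermediate skeins in the span $\mathcal{M}_T$. I would close by noting this shows not merely computability but that the output is a finite $\Z[A,A^{-1}]$-linear combination of elements of $\mathcal{M}_T$, which is what is needed for the algorithm.
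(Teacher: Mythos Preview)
Your overall strategy --- induction using Corollary \ref{1-recursion}, with the three ``extra'' terms reduced via Corollary \ref{extraterms} --- matches the paper's, and you are right to flag the fifth term $\smallD{1}{0}{m-1}{n+1}$ as the one that threatens the ordering.  But there is a genuine gap, and it is not where you think: it lies in your treatment of the \emph{first} and \emph{fourth} terms, the multiplicative ones.

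You write that $\smallTvect{1}{0}\cdot\smallD{m-1}{n}{0}{1}$ is fine because the discrepancy is known by induction and ``re-expanding via Equation (\ref{def:D}) stays within $\mathcal M_T$.''  That last clause is vacuous.  The point is that if $\smallD{m-1}{n}{0}{1}=\sum c_{ab}\smallTvect{a}{b}$, then
\[
\smallTvect{1}{0}\cdot\smallD{m-1}{n}{0}{1}
=\sum c_{ab}\Bigl(A^{b}\smallTvect{a+1}{b}+A^{-b}\smallTvect{a-1}{b}+\eta\,\smallD{1}{0}{a}{b}\Bigr),
\]
so you must \emph{compute} every $\smallD{1}{0}{a}{b}$ that appears.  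Via Corollary \ref{extraterms} this reduces to $\smallD{b}{c}{0}{1}$; for that to fall into your strong-induction-on-$p$ range you need $b<m$.  Nothing in your hypothesis gives that: you have assumed only that $\smallD{m-1}{n}{0}{1}$ is computable, not that its summands $\smallTvect{a}{b}$ are small.  The same issue arises for $\smallD{1}{0}{m-1}{n}\cdot\smallTvect{0}{1}$.

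The paper's fix is exactly the strengthening you are missing: one proves simultaneously that $\smallD{p}{q}{0}{1}$ is computable \emph{and} that every term in it is $\leq\smallpqvect{p-2}{q-1}$.  With that bound in hand, the $(a,b)$ in the display above satisfy $a\le m-3$, $b\le n-1$, so each $\smallD{1}{0}{a}{b}$ reduces to a discrepancy with first entry $b\le n-1$, safely inside the induction range.  The size bound is also what controls your problematic fifth term cleanly, without the ad hoc manoeuvres you sketch at the end.  So the missing idea is: carry the inequality $\smallD{p}{q}{0}{1}\le\smallpqvect{p-2}{q-1}$ through the induction.
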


\begin{proof}
From Lemmas \ref{smallcrossings} and \ref{q=0}, we have that $\smallD{p}{q}{0}{1}$ is computable for  $p=0, 1$ or $q=0$.  Hence assume from now on that $p \geq 2$ and $q \geq 1$.   We will use induction to prove a slightly stronger statement, that $\smallD{p}{q}{0}{1}$ is computable and that no term in it is larger than  $\smallpqvect{p-2}{q-1}$.   Lemma \ref{smallcrossings2} provides the base case $p = 2$, and  Lemma \ref{q=1} provides $q= 1$.

Now fix $p \geq 3$ and $q \geq 2$.  Assume that for $n= 0, \ldots q$, $\smallD{p}{n}{0}{1}$ has been computed and that $\smallD{p}{n}{0}{1} \leq \smallpqvect{p-2}{n-1}$ for all $p$.  Also assume that for $m = 0, \ldots p$, $\smallD{m}{q+1}{0}{1}$ has been computed and that $\smallD{m}{q+1}{0}{1} \leq \smallpqvect{m-2}{q}$.  We wish to show that the five terms on the right of the Recursion Rule from Corollary \ref{1-recursion}
\begin{eqnarray*}
A^{q+1} \bigD{p+1}{q+1}{0}{1} 
&=& \pqvect{1}{0} \bigD{p}{q+1}{0}{1} - A^{-(q+1)} \bigD{p-1}{q+1}{0}{1} \\
& & +   \quad A^{-p} \bigD{1}{0}{p}{q} - \bigD{1}{0}{p}{q+1} \pqvect{0}{1} + A^p \bigD{1}{0}{p}{q+2}
\end{eqnarray*}
can be obtained from previous terms and that each term is no larger than $\smallpqvect{p-1}{q}$.

For the second term  $\smallD{p-1}{q+1}{0}{1}$, the induction hypothesis applies directly.

For the third term,  observe that  $\smallD{1}{0}{p}{q}$ is the reverse of $\smallD{q}{p}{0}{1}$.   Thus, one is computable if and only if the other is computable, and the inequality $\smallD{1}{0}{p}{q} \leq \smallpqvect{p-1}{q}$ is equivalent to $\smallD{q}{p}{0}{1} \leq \smallpqvect{q}{p-1}$.    If $p \leq q $, then the induction hypothesis applies, and in particular, $\smallD{q}{p}{0}{1} \leq \smallpqvect{q-2}{p-1} \leq \smallpqvect{q}{p-1}$.    If $p \geq q$, then we first reduce $p$ mod $q$.  We write $p = aq + p'$ where $a \in \Z$ and $0 \leq p' < q$, so that the induction hypothesis applies and $\smallD{q}{p'}{0}{1} \leq \smallpqvect{q-1}{p'-1}$.  We can then compute $\smallD{q}{p}{0}{1}$ by applying a Dehn twist $a$ times as in Lemma \ref{Dehn}, and $\smallD{q}{p}{0}{1} \leq \smallpqvect{q-1}{a(q-1) + p'-1} \leq \smallpqvect{q}{p-1}.$

The fifth term is similar.  $\smallD{1}{0}{p}{q+2}$ is the reverse of $\smallD{q+2}{p}{0}{1}$, and it is equivalent to show $\smallD{q+2}{p}{0}{1} \leq \smallpqvect{q}{p}$. If $p \leq q$, then this is covered by the induction hypothesis.   If $p = q+1$, apply a transformation and then use the opposition transformation from Proposition \ref{opp} (alternatively, we can appeal to Equation (\ref{q=p-1})).  For $p \geq q+2$,  first reduce $p$ mod $q+2$ and then apply the induction hypothesis.

For the first term, the induction hypothesis implies that  $ \smallD{p}{q+1}{0}{1}  =  \displaystyle \sum_{\substack{0 \leq m \leq p-2 \\ 0 \leq n \leq q}} c_{mn} \Tvect{m}{n}$, with the known coefficients $c_{mn} \in \Z[A, A{-1}, \eta]$.  By Equation (\ref{def:D}),
\[ \smallpqvect{1}{0} \smallD{p}{q+1}{0}{1} 
=  \sum_{\substack{0 \leq m \leq p-2 \\ 0 \leq n \leq q}} c_{mn}( A^n \smallTvect{m+1}{n} + A^{-n} \smallTvect{m-1}{n} + \eta \smallD{1}{0}{m}{n} )\]
Since $m \leq p-2$ and $n \leq q$, we have that $\smallTvect{m+1}{n},  \smallTvect{m-1}{n} \leq \smallpqvect{m-1}{n}$.    Also, because $0 \leq b \leq q$, the induction hypothesis implies that, possibly after reduction of $m$ mod $n$, the discrepancy  $\smallD{n}{m}{0}{1}$ is computed and $\smallD{n}{m}{0}{1} \leq \smallpqvect{q}{p-1}$.  By taking the reverse, it follows that $\smallD{1}{0}{m}{n}$ is computable and  $\smallD{1}{0}{m}{n} \leq \smallpqvect{p-1}{q}$.

For the fourth term, the argument is similar. 
The induction hypothesis gives $\smallD{q+1}{p}{0}{1} \leq \smallpqvect{q-1}{p-1}$, possibly after reduction of $p$ mod $q+1$.  Thus  $\smallD{1}{0}{p}{q+1} = \displaystyle \sum_{\substack{0\leq m \leq p-1 \\0 \leq n \leq q-1}} c_{mn} \smallTvect{m}{n} $, and 
\[ \smallD{1}{0}{p}{q+1} \smallpqvect{0}{1}  
 =  \sum_{\substack{0 \leq m \leq p-1 \\ 0 \leq n \leq q}} c_{mn} (A^m \smallTvect{m}{n+1} + A^{-m} \smallTvect{m}{n-1} + \eta \smallD{m}{n}{0}{1} ).\] 
Because $0 \leq b \leq q$, each term on the right is known, and no term is larger than $\smallpqvect{p-1}{q}$.

\end{proof}

\begin{remark} In the course of the proof above, we showed that when $p \geq 2, q \geq 1$,  we have the discrepancy  $\smallD{p}{q}{0}{1} = \sum_{a, b} c_{ab} \smallTvect{a}{b}$ where the sum is taken over $0\leq a \leq p-2$ and $ 0 \leq b \leq q-2$.  See Theorems 4.6 and  4.7 of \cite{bakshi4-holedsphere} for comparable statements for the 4-holed sphere, which were proved by a careful analysis of the intersection numbers and without the organizing principle provided by the Recursion Relation. 
\end{remark}


\begin{remark}
Proposition \ref{Dpq01} proves the validity of  Algorithm \ref{subroutine:s=1} for computing $\smallD{p}{q}{0}{1}$ for any $p, q \geq 0$.  However, as can be seen by a careful analysis of the proof,  we can get substantial computational savings at several key steps.  First, one can apply Dehn twists, as in Corollary \ref{extraterms}, to reduce $q$ mod $p$.  One can also apply Proposition \ref{opp} to ensure that $q \leq \lfloor p/2 \rfloor$.  This is incorporated into Steps 3 and 5 of Algorithm \ref{mainalgorithm}.

 In addition, for fixed $q$, prior to an iteration of the Recursion Rule to compute $\smallD{p}{q+1}{0}{1}$, Algorithm \ref{subroutine:s=1} first computes and stores all values of $\smallD{1}{0}{m}{q}$ from values of its reverse $\smallD{q}{m}{0}{1}$ from earlier iterations by using Dehn twists to reduce $m$ mod $q$.  The algorithm also pre-computes $\smallD{1}{0}{m}{q+1 }$ and $\smallD{1}{0}{m}{q+2}$ in the same way.  Note that we may have to apply Proposition \ref{opp} to reduce $\smallD{q+1}{q}{0}{1}$, $\smallD{q+2}{q+1}{0}{1}$ and $\smallD{q+2}{q}{0}{1}$ to reach stored values.  We will see that this will give computational savings in  Proposition \ref{complexity}. 
\end{remark}

\begin{proposition}
The Recursion Rule computes  $\bigD{p}{q}{r}{s}$ for all $p, q, r, s$. 
\end{proposition}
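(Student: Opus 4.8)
The plan is to bootstrap from Proposition~\ref{Dpq01}, which already computes $\bigD{p}{q}{0}{1}$ for all $p,q\ge 0$, via two reductions. First I would put the discrepancy into normal form using the mapping-class action. Applying the Euclidean algorithm to the column $\smallpqvect{r}{s}$ produces $\phi\in\SL(2,\Z)$ with $\phi_*\smallpqvect{r}{s}=\smallpqvect{0}{s'}$, $s'=\gcd(r,s)\ge 0$; then Lemma~\ref{Dehn} gives $\phi_*\bigD{p}{q}{r}{s}=\bigD{p'}{q'}{0}{s'}$ with $\smallpqvect{p'}{q'}=\phi_*\smallpqvect{p}{q}$, so it suffices to compute the right-hand side and then apply $\phi_*^{-1}$, which is the linear action of an $\SL(2,\Z)$-matrix on torus links and fixes $\eta$, hence is immediate on any skein written in the basis $\mathcal{M}_T$. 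Reducing $q'$ modulo $p'$ by Dehn twists (Lemma~\ref{tw}) and using $\smallpqvect{0}{s'}=\smallpqvect{0}{-s'}$, I may assume $0\le q'<p'$ and $s'\ge 0$, as in the discussion following Lemma~\ref{Dehn}. If $s'=0$ the discrepancy vanishes by Lemma~\ref{smallcrossings}; if $q'=0$ (which includes $p'\in\{0,1\}$) it vanishes by Proposition~\ref{q=0}. So the problem reduces to computing $\bigD{p}{q}{0}{s}$ with $p\ge 2$, $0<q<p$, and $s\ge 1$.

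The second reduction is an induction on $s$. The base case $s=1$ is Proposition~\ref{Dpq01}, and $\bigD{p}{q}{0}{0}=0$ by Lemma~\ref{smallcrossings}. For $s\ge 2$ I would invoke the $s$-direction Recursion Rule (Corollary~\ref{s-recursion}), rewritten as
\[
\bigD{p}{q}{0}{s}=\bigD{p}{q}{0}{s-1}\cdot\Tvect{0}{1}-\bigD{p}{q}{0}{s-2}+A^{p(s-1)}\bigD{p}{q+s-1}{0}{1}+A^{-p(s-1)}\bigD{p}{q-s+1}{0}{1}.
\]
The first two terms are given by the inductive hypothesis on $s$; the two discrepancies of $s$-coordinate $1$ are computed by Proposition~\ref{Dpq01} (the second, with index $q-s+1$ possibly negative, after adding a multiple of $p$ to that index via Lemma~\ref{tw}). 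The only remaining point is to re-express the product $\bigD{p}{q}{0}{s-1}\cdot\Tvect{0}{1}$ in the basis $\mathcal{M}_T$: writing $\bigD{p}{q}{0}{s-1}=\sum c_{k,m,n}\,\eta^{k}\smallTvect{m}{n}$ with $c_{k,m,n}\in\Z[A,A^{-1}]$ and applying Equation~(\ref{def:D}) to each factor, namely $\smallTvect{m}{n}\cdot\smallTvect{0}{1}=A^{m}\smallTvect{m}{n+1}+A^{-m}\smallTvect{m}{n-1}+\eta\,\bigD{m}{n}{0}{1}$, rewrites the product as a finite combination of $\eta^{k}\smallTvect{a}{b}$'s over $\Z[A,A^{-1}]$, since every $\bigD{m}{n}{0}{1}$ appearing is itself computed by Proposition~\ref{Dpq01} after reducing indices with Lemmas~\ref{tw} and~\ref{opp}. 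This closes the induction, and Algorithms~\ref{subroutine:s=1} and~\ref{subroutine:bigs} are exactly the resulting procedure.

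I expect no essentially new difficulty: the substantive work is all in Proposition~\ref{Dpq01}, and what remains is bookkeeping — checking that each auxiliary discrepancy called by Corollary~\ref{s-recursion} and by the expansion of $\smallTvect{m}{n}\cdot\smallTvect{0}{1}$ can be normalized, via Lemmas~\ref{Dehn},~\ref{tw},~\ref{opp} and~\ref{reverse}, into the hypotheses of Proposition~\ref{Dpq01}, and that the recursion strictly decreases $s$ so that it terminates. The mild point of care is tracking that the indices produced by the shifted Recursion Rule and by the product expansion stay within the range where the earlier results apply, which the listed normalizations ensure.
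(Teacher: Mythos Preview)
Your proposal is correct and follows essentially the same route as the paper: reduce by a mapping class to $\smallD{p}{q}{0}{s}$, take Proposition~\ref{Dpq01} as the base case, and induct on $s$ via Corollary~\ref{s-recursion}, expanding the product $\smallD{p}{q}{0}{s-1}\cdot\smallTvect{0}{1}$ term-by-term using Equation~(\ref{def:D}). The only notable difference is that the paper additionally carries along the explicit bound $\smallD{p}{q}{0}{k}\le\smallpqvect{p-2}{q+k-1}$ through the induction; this is not needed for computability per se (finiteness of the expansion suffices, as you observe), but it feeds into the complexity estimate of Proposition~\ref{complexity}. One small citation nit: the Dehn twist you use to reduce $q'$ modulo $p'$ while keeping the second column $\smallpqvect{0}{s'}$ fixed is the general Lemma~\ref{Dehn} with $\left(\begin{smallmatrix}1&0\\k&1\end{smallmatrix}\right)$, not Lemma~\ref{tw}, which is stated only for $s=1$.
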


\begin{proof}
By applying a mapping class, we reduce to the the case of $\smallD{p}{q}{0}{s}$ where $0 \leq q \leq \lfloor p/2 \rfloor$.  Proposition \ref{Dpq01} proves the validity of the subalgorithm when $s=1$.  For larger $s$, let us assume that we have already computed  and stored $\smallD{a}{i}{0}{1}$ for $a = 1, \ldots p$ and $i = 0, \ldots, \lfloor p/2 \rfloor$. Moreover, from the proof of Proposition \ref{Dpq01} we also have that $\smallD{a}{i}{0}{1} \leq \smallpqvect{a-2}{i-1}$.

Fix $s$.  The base case is given by Proposition \ref{Dpq01}.  Assume that we have $\smallD{p}{q}{0}{k}$ for all $k = 1, \ldots s$ and that no term of $\smallD{p}{q}{0}{k}$ is larger than $\smallpqvect{p-2}{q + k-1}$. 
 By the Recursion Relation from Corollary \ref{s-recursion}, we have
\[
\smallD{p}{q}{0}{s+1} = \smallD{p}{q}{0}{s} \cdot \smallTvect{0}{1} - \smallD{p}{q}{0}{s-1} + A^{ps} \smallD{p}{q+s}{0}{1} +A^{-ps} \smallD{p}{q-s}{0}{1}  
\] 
The last two terms can be obtained from the given values of $\smallD{p}{q}{0}{1}$, by applying Dehn twists as in Lemma \ref{Dehn} to reduce $q+s$ (resp. $q-s$) mod $p$.

The second term follows directly from the induction hypothesis.

For the first term,  the induction hypothesis implies $\smallD{p}{q}{0}{s} = \displaystyle \sum_{\substack{0\leq m \leq p-2 \\0 \leq n \leq q + s-1}} c_{mn} \smallTvect{m}{n} $. Thus
\[ \smallD{p}{q}{0}{s} \smallpqvect{0}{1}  
 =  \sum_{\substack{0 \leq m \leq p-2 \\ 0 \leq n \leq q+s-1}} c_{mn} (A^m \smallTvect{m}{n+1} + A^{-m} \smallTvect{m}{n-1} + \eta \smallD{m}{n}{0}{1} ).\] 
 Each of the terms on the right may be computed by simple transformations from known values, and none is larger than $\smallpqvect{p-2}{q+s}$, as desired. 
\end{proof}


\begin{proposition}  \label{complexity} The time and space complexity of the algorithm computing $D\begin{pmatrix}
p & r \\ q & s \end{pmatrix}$ is $O\bigg(\begin{vmatrix} p & r\\q&s\end{vmatrix}^6\bigg).$   
\end{proposition}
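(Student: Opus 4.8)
The plan is to track one parameter, the intersection number $c = \left| \begin{smallmatrix} p & r\\ q & s \end{smallmatrix} \right|$, through the stages of Algorithm~\ref{mainalgorithm}, and to bound separately (a) how many distinct discrepancies the algorithm computes and stores and (b) the cost of producing each one. First I would dispose of the preprocessing in Algorithm~\ref{mainalgorithm}: locating the mapping class $\phi$ of Lemma~\ref{Dehn} is a Euclidean-algorithm computation in $O(\log c)$ steps, and applying $\phi$ and, if needed, the opposition of Lemma~\ref{opp} replaces $\smallD{p}{q}{r}{s}$ by a normal-form discrepancy $\smallD{p_1}{q_2}{0}{s_1}$ with $p_1 s_1 = c$ and $0 \le q_2 \le \lfloor p_1/2 \rfloor \le c/2$; translating the final answer back is a relabelling of a bounded-size object, hence $O(c^2)$ by the size estimate below. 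So it is enough to bound the cost of Algorithms~\ref{subroutine:s=1} and~\ref{subroutine:bigs} on such a normal-form input.

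The second step is a size estimate, which I would take essentially verbatim from the proof of Proposition~\ref{Dpq01} and the Remark following it: every discrepancy $\smallD{a}{b}{0}{1}$ the algorithm writes down is a $\Z[A, A^{-1}, \eta]$-combination of basis elements $\smallTvect{i}{j}$ with $i \le a-2$ and $j \le b-2$, and more generally $\smallD{a}{b}{0}{k}$ only involves $\smallTvect{i}{j}$ with $i \le a-2$ and $j \le b+k-2$; since $a \le p_1 \le c$ and $b, k = O(c)$ throughout, each such discrepancy has $O(c^2)$ terms. From the loop bounds in Algorithm~\ref{subroutine:s=1} I would then read off that the values actually stored are $\smallD{m}{n}{0}{1}$ for $1 \le n \le \lfloor p_1/2 \rfloor$ and $n \le m \le p_1$ — there are $O\!\left(p_1 \lfloor p_1/2 \rfloor\right) = O(c^2)$ of these when $s_1 = 1$, and $O(p_1^2)$ in general — together with the $O(s_1) = O(c)$ further values $\smallD{p_1}{q_2}{0}{k}$ for $k = 2, \dots, s_1$ produced by Algorithm~\ref{subroutine:bigs}.

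The heart of the argument is costing one application of the Recursion Rule (Corollary~\ref{1-recursion} or Corollary~\ref{s-recursion}) that produces one new discrepancy. The right-hand sides contain a bounded number of terms of the form $A^{\bullet}\,\smallD{\cdot}{\cdot}{\cdot}{\cdot}$, each obtained from an already-stored value by $O(1)$ relabelling passes — twist, opposition, reverse, and Dehn twists (Lemmas~\ref{tw},~\ref{opp}, Corollary~\ref{extraterms}, Lemma~\ref{Dehn}); the key point is that $\tw^a$ costs $O(1)$ per basis term no matter how large $a$ is, so the number of passes stays $O(1)$, each pass costing $O(c^2)$, and scaling and accumulating these is $O(c^2)$. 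The dominant cost is the two products of the form $\smallpqvect{u}{v}\cdot \smallD{\cdot}{\cdot}{\cdot}{\cdot}$: the discrepancy factor has $O(c^2)$ terms $\smallTvect{i}{j}$, and expanding each $\smallTvect{u}{v}\cdot \smallTvect{i}{j}$ by Equation~(\ref{def:D}) gives two new basis terms plus $\eta$ times a discrepancy which, after a bounded relabelling, is fetched from the table and added to the running total at cost $O(c^2)$. Hence each product, and therefore each recursion step, costs $O(c^2)\cdot O(c^2) = O(c^4)$. Multiplying the two estimates: Algorithm~\ref{subroutine:s=1} performs $O(c^2)$ recursion steps of cost $O(c^4)$, giving $O(c^6)$; when $s_1 \ge 2$ the precomputation of $\smallD{p_1}{i}{0}{1}$ runs the same subroutine with $p_1 \le c/2$, still $O(c^6)$, and Algorithm~\ref{subroutine:bigs} adds only $O(s_1) = O(c)$ steps of cost $O(c^4)$, i.e.\ $O(c^5)$, which is subdominant. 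So the running time is $O(c^6)$, and since the largest stored table holds $O(c^2)$ discrepancies of $O(c^2)$ terms each, the space is $O(c^4)$, certainly $O(c^6)$.

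The step I expect to be the real obstacle is not the arithmetic but the bookkeeping: I must verify that at the moment each recursion step fires, the table already holds — up to the $O(1)$ relabellings that keep everything inside the box $\smallpqvect{p_1-2}{\,\cdot\,}$ — every discrepancy that step queries, and that no intermediate object ever escapes that box. Both facts are exactly the invariants maintained in the proof of Proposition~\ref{Dpq01}, so the work is to check that the loop orders in Algorithms~\ref{subroutine:s=1} and~\ref{subroutine:bigs} respect the dependency order those invariants dictate (for instance, that the pre-computation of $\smallD{1}{0}{m}{n\pm1}$ inside Algorithm~\ref{subroutine:s=1} always reduces, via Dehn twists, to an entry computed in an earlier iteration of the outer loop). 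A secondary point to state cleanly is the cost model: I would count one operation in the coefficient ring $\Z[A, A^{-1}, \eta]$ as a single step, noting that the $A$-degrees and $\eta$-powers that occur are polynomially bounded in $c$, so accounting for them explicitly would inflate the exponent by at most a bounded amount.
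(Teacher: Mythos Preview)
Your proposal is correct and follows essentially the same approach as the paper: reduce to a normal form $\smallD{p_1}{q_2}{0}{s_1}$, bound the size of each stored discrepancy by a quadratic in the parameters, identify the multiplication terms in the Recursion Rule as the dominant $O(c^4)$ cost per step, and multiply by the $O(c^2)$ iterations of Algorithm~\ref{subroutine:s=1} to reach $O(c^6)$. The only presentational difference is that you track the single parameter $c$ throughout, whereas the paper keeps $p, q, s$ separate (arriving at $O(p^3 q^3 s)$) and coarsens to $O(c^6)$ only at the end; your explicit discussion of the table-dependency invariants and the coefficient-ring cost model is more careful than the paper's treatment, but not substantively different.
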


\begin{proof}
Using coefficients from the mapping class $\phi$ in Lemma 3.2, it is sufficient to consider $D\begin{pmatrix}
p & 0 \\ q & s \end{pmatrix}$. With a complexity of $\widetilde{O}(\log (\min(r,s)))$, the Euclidean algorithm allows us to assume that $s, r$ have similar orders of magnitude.

We will estimate the space and time complexity for computing $D\begin{pmatrix}
p & 0 \\ q & 1 \end{pmatrix}$, then apply Corollary 3.4 to obtain $D\begin{pmatrix}
p & 0 \\ q & s \end{pmatrix}$.

First, suppose that the value of all $D\begin{pmatrix}
u & 0 \\ v & 1 \end{pmatrix}$ for which $u \leq p$ and $v \leq q$ are already computed. In the five-term recursion of Corollary 3.3, the second, third, and fifth term involve accessing those precomputed values, all of which take space $O(pq)$ to store. Looping over the values once therefore takes $O(pq)$ time for summation and simplifications. \\

The first term of the five-term recursion formula involves computing the expression 
$$\begin{pmatrix}
1 \\ 0 \end{pmatrix}_T \cdot D\begin{pmatrix}
p & 0 \\ q-1 & 1 \end{pmatrix}$$
which involves multiplication with stored values $$D\begin{pmatrix}
p & 0 \\ q-1 & 1 \end{pmatrix} = \sum_{0\leq m \leq p-2, 0\leq n \leq q-1} c_{mn} \begin{pmatrix}
m \\ n \end{pmatrix}_T $$
Overall, this gives us the expression 
$$ \begin{pmatrix}
1 \\ 0 \end{pmatrix} \cdot D\begin{pmatrix}
p & 0 \\ q-1 & 1 \end{pmatrix} = \sum_{0\leq m \leq p-2, 0\leq n \leq q-1} c_{mn}(A^n \begin{pmatrix}
m+1 \\ n \end{pmatrix}_T + A^{n-1} \begin{pmatrix}
m-1 \\ n \end{pmatrix}_T + \eta D\begin{pmatrix}
1 & m \\ 0 & n \end{pmatrix})$$
This involves at most $pq$ of $D\begin{pmatrix}
1 & m \\ 0 & n \end{pmatrix}$'s, each with at most $pq$ terms. Therefore it takes $O(p^2q^2)$ operations to substitute and simplify the terms. The argument is similar for the fourth term in the recursion formula.\\

To obtain $D\begin{pmatrix}
p & 0 \\ q & s \end{pmatrix}$ from $D\begin{pmatrix}
p & 0 \\ q & 1 \end{pmatrix}$, one needs to apply the formula from Corollary 3.4 for each $D\begin{pmatrix}
p & 0 \\ q & i \end{pmatrix}$ from $i = 2, ..., s$. Notice that the first multiplicative term takes $O(p^2q^2)$ to compute, which dominates the running time of each iteration. Counting the additional space used to store these terms, this gives an overall complexity of $O(p^2q^2s)$.\\

Finally, to satisfy the assumption that all $D\begin{pmatrix}
u & 0 \\ v & 1 \end{pmatrix}$ for $u \leq p$ and $v \leq q$ are computed, we compute all such $D\begin{pmatrix}
u & 0 \\ v & 1 \end{pmatrix}$'s, with the explicit order given by Algorithm 2. This gives another loop of $O(pq)$ terms, which brings both the time and space complexity to store all items to $O(p^3q^3s)$. Since we have $q = \lfloor p/2 \rfloor$ in the worst case, we obtain an upper bound of $O\bigg(\begin{vmatrix} p & r\\q&s\end{vmatrix}^6\bigg)$ for the overall complexity. 
\end{proof}

{\bf Acknowledgements.} 
The authors would also like to thank the Claremont McKenna College mathematics department for their support in this research, as well as Wade Bloomquist, Francis Bonahon, Han-Bom Moon and Charles Frohman for helpful conversations.   They also thank the anonymous referee for a careful reading of the paper.  
Preliminary research used Mathematica code for computing multiplication written by Jonathan Hahn and Collin Hazlett while undergraduates at Carleton College.   The authors were supported in part by NSF Grants DMS-1906323 and DMS-2305414.


\renewcommand{\refname}{References Cited}

\bibliographystyle{plain}

 \bibliography{PuncturedTorusRefs}

\end{document}